\documentclass[11pt]{amsart}
\usepackage[T1]{fontenc}
\usepackage[utf8]{inputenc}
\usepackage{lmodern}
\usepackage{microtype}
\usepackage{amsmath,amssymb,amsthm,mathtools}
\usepackage{tikz}
\usetikzlibrary{arrows.meta,positioning,fit,shapes.geometric}
\usepackage{booktabs}
\usepackage{array}
\usepackage{enumitem}
\usepackage{adjustbox}
\usepackage{geometry}
\usepackage{hyperref}
\hypersetup{
  colorlinks=true,
  linkcolor=blue,
  citecolor=blue,
  urlcolor=blue,
  pdftitle={Local Complexity, Overlap Control, and Global Simplicity in Partition Graphs: A Conceptual Synthesis for Gn and Kn=Cl(Gn)},
  pdfauthor={Fedor B. Lyudogovskiy},
  pdfsubject={2020 Mathematics Subject Classification: Primary 05A17, 55U10; Secondary 05C69, 05C75, 05C76, 05C30, 06A07, 55P15},
  pdfkeywords={integer partitions, partition graph, clique complex, local morphology, local simplex dimension, canonical cover, nerve lemma, intersection poset, overlap control, Euler characteristic, bouquet of spheres},
  bookmarks=true,
  bookmarksnumbered=true
}

\newtheorem{theorem}{Theorem}[section]
\newtheorem{synthesistheorem}[theorem]{Synthesis Theorem}
\newtheorem{backgroundtheorem}[theorem]{Background Theorem}
\newtheorem{proposition}[theorem]{Proposition}
\newtheorem{backgroundproposition}[theorem]{Background Proposition}
\newtheorem{corollary}[theorem]{Corollary}
\newtheorem{lemma}[theorem]{Lemma}
\newtheorem{openproblem}[theorem]{Open Problem}

\theoremstyle{definition}

\newtheorem{synthesisprinciple}[theorem]{Synthesis Principle}

\theoremstyle{remark}
\newtheorem{remark}[theorem]{Remark}

\DeclareMathOperator{\Cl}{Cl}
\DeclareMathOperator{\Par}{Par}

\title[Local Complexity and Global Simplicity in Partition Graphs]{Local Complexity, Overlap Control, and Global Simplicity in Partition Graphs:\\
A Conceptual Synthesis for \texorpdfstring{\(G_n\)}{Gn} and \texorpdfstring{\(K_n=\Cl(G_n)\)}{Kn=Cl(Gn)}}

\author[F. B. Lyudogovskiy]{Fedor B. Lyudogovskiy}
\date{}

\subjclass[2020]{Primary 05A17, 55U10; Secondary 05C69, 05C75, 05C76, 05C30, 06A07, 55P15.}

\keywords{integer partitions; partition graph; clique complex; local morphology; local simplex dimension; canonical cover; nerve lemma; intersection poset; overlap control; Euler characteristic; bouquet of spheres}

\begin{document}

\begin{abstract}
For each positive integer \(n\), let \(G_n\) be the partition graph whose vertices are the partitions of \(n\), with adjacency defined by an elementary transfer of one unit between parts, followed by reordering, and let
\[
K_n=\Cl(G_n)
\]
be its clique complex. Previous work revealed two apparently contrasting features. Locally, the graphs \(G_n\) become increasingly rich: local clique dimensions grow, degree landscapes refine, support jumps and simplex layers proliferate, and axial, rear, central, shell, and directional structures become more pronounced. Globally, however, the clique complexes remain homotopically simple:
\[
K_n\simeq \bigvee^{b_n}S^2,
\qquad
b_n=\chi(K_n)-1.
\]

This paper gives a conceptual synthesis explaining why these facts are compatible. The local side is governed by ordered local transfer types, which determine local neighborhood graphs, degrees, local clique numbers, and local simplex dimensions. The global side is governed not by the largest local simplices, but by the overlap pattern of canonical full star/top simplices. These simplices form a good cover \(\mathcal C_n\), giving
\[
K_n\simeq N(\mathcal C_n)=N_n.
\]
The nerve admits an anchor-cover and intersection-poset reduction
\[
N_n\simeq \Delta(J_n),
\qquad
\dim\Delta(J_n)\le 2.
\]
High-dimensional local simplices therefore occur inside contractible containers, while global topology is controlled by a low-dimensional overlap poset. Within this family, the qualitative topological problem reduces to the numerical computation and interpretation of a single integer, the Euler characteristic \(\chi(K_n)\), equivalently the bouquet rank \(b_n=\chi(K_n)-1\).
\end{abstract}

\maketitle

\tableofcontents

\section{Introduction}

The partition graph \(G_n\) is the graph whose vertices are the partitions of \(n\), with two partitions adjacent when one can be obtained from the other by an elementary transfer of one unit between parts, followed by reordering. Its clique complex will be denoted by
\[
K_n=\Cl(G_n).
\]
The general notation and terminology follow the preceding papers in the partition-graph cycle, especially the topological and local-morphological foundations \cite{LyudogovskiyHomotopyPartitionGraph,LyudogovskiyLocalMorphology}.

This paper addresses a structural contrast that has emerged across the study of the family \(G_n\). As \(n\) grows, the graphs \(G_n\) develop increasingly rich local and morphological structure. Their degree landscapes become more varied, their local clique structure becomes thicker, support strata and support jumps become more intricate, simplex layers and phase boundaries become more pronounced, and axial, rear, central, and directional phenomena become increasingly visible \cite{LyudogovskiyGrowingDiscreteGeometricObject,LyudogovskiyAxialMorphology,LyudogovskiyDegreeLandscape,LyudogovskiyBoundaryRear,LyudogovskiyDirectionalGeometry,LyudogovskiyMorphogenesis,LyudogovskiySimplicialShells,LyudogovskiySimplexLayers,LyudogovskiySupportJumps,LyudogovskiyJumpGradient}.

At the same time, the global homotopy type of the clique complex remains strikingly simple:
\[
K_n\simeq \bigvee^{b_n}S^2,
\qquad
b_n=\chi(K_n)-1.
\]
Here and below, an empty wedge is understood as a point. Thus the qualitative global topology is completely controlled by the Euler characteristic. In particular,
\[
\widetilde H_i(K_n)=0\qquad(i\ne 2),
\]
and the only remaining global homological rank is
\[
\operatorname{rank}\widetilde H_2(K_n)=b_n.
\]

This paper explains why these two phenomena are compatible by identifying the structural mechanism that separates local and morphological growth from global homotopy complexity.

\subsection{The local/global question}

The guiding question is: \emph{Why does local and morphological complexity grow while global topology stays simple?}

On the local side, the elementary transfer rule produces increasingly rich local neighborhoods. For a partition
\[
\lambda=(s_1^{m_1},\ldots,s_t^{m_t})\vdash n,
\qquad
s_1>\cdots>s_t>0,
\]
the local transfer behavior is controlled by the ordered local transfer type
\[
\mathfrak t(\lambda)
=
(t;\alpha_1,\ldots,\alpha_t;\beta_1,\ldots,\beta_t),
\]
where
\[
\alpha_i=\mathbf 1_{m_i=1},
\qquad
\beta_i=\mathbf 1_{s_i-s_{i+1}=1},
\qquad
s_{t+1}=0.
\]
As recalled from the local morphology paper \cite{LyudogovskiyLocalMorphology}, these binary data determine the local transfer graph \(B(\lambda)\), and hence the local neighborhood graph
\[
G_n[N_{G_n}(\lambda)]\cong L(B(\lambda)).
\]
Consequently, they determine local invariants such as degree, local clique number, and local simplex dimension. The distribution of these local transfer types across the partition set generates much of the observed morphology of \(G_n\).

On the global side, however, the topology of \(K_n\) is not governed directly by the largest local simplices. It is governed by the way canonical contractible pieces overlap. More precisely, every simplex of \(K_n\) is contained in a full star-simplex or a full top-simplex. These pieces form a canonical cover
\[
\mathcal C_n
\]
of \(K_n\). The cover is good, hence by the nerve lemma and the canonical-cover theorem \cite{BorsukNerve,BjornerTopologicalMethods,KozlovCombinatorialAlgebraicTopology,LyudogovskiyHomotopyPartitionGraph}
\[
K_n\simeq N(\mathcal C_n)=N_n.
\]
The nerve \(N_n\) admits a further anchor-cover and intersection-poset reduction
\[
N_n\simeq \Delta(J_n),
\]
where \(J_n\) is the intersection poset of anchor simplices. The key rigidity statement is that this poset has no long strict chains, so
\[
\dim\Delta(J_n)\le 2.
\]

This is the central mechanism of the paper:
\[
\boxed{\text{local complexity grows inside contractible containers,}}
\]
while
\[
\boxed{\text{global topology is governed by a low-dimensional overlap poset.}}
\]

\begin{figure}[ht]
\centering
\begin{tikzpicture}[
  node distance=1.25cm and 1.35cm,
  box/.style={draw, rounded corners, align=center, inner sep=6pt, minimum height=8mm},
  arrow/.style={-Latex, thick}
]
\node[box] (lt) {local transfer\\types};
\node[box, right=of lt] (lc) {local clique\\richness};
\node[box, right=of lc] (mor) {morphology\\of \(G_n\)};
\node[box, below=1.1cm of lc] (glob) {global type\\\(K_n\simeq \bigvee^{b_n}S^2\)};
\draw[arrow] (lt) -- (lc);
\draw[arrow] (lc) -- (mor);
\draw[arrow] (lc) -- node[right, font=\small, align=center] {contained in\\star/top containers} (glob);
\end{tikzpicture}
\caption{The basic local/global contrast. Local transfer-type diversity generates increasing local and morphological complexity in \(G_n\), while the clique complex \(K_n\) remains homotopy equivalent to a bouquet of two-spheres.}
\label{fig:basic-contrast}
\end{figure}

\subsection{Why large local simplices are not enough}

A naive expectation might be that the appearance of larger and larger simplices in \(K_n\) should lead to increasingly complicated global topology. This expectation is natural but incorrect.

Indeed, local simplex dimension is unbounded. For the staircase partition
\[
\delta_t=(t,t-1,\ldots,2,1)\vdash T_t,
\qquad
T_t=\frac{t(t+1)}2,
\]
the local simplex dimension is
\[
\dim_{\mathrm{loc}}(\delta_t)=t-1.
\]
Hence \(K_{T_t}\) contains simplices of dimension at least \(t-1\), and the simplicial dimension of \(K_n\) is unbounded along the family. Nevertheless,
\[
K_n\simeq \bigvee^{b_n}S^2
\]
for every \(n\). In particular,
\[
\widetilde H_k(K_n)=0\qquad(k\ge 3).
\]
Thus the implication
\[
\text{large local simplices}\Longrightarrow\text{high-dimensional global homology}
\]
fails for this family: a large simplex is still contractible, and high-dimensional homology does not appear. What matters globally is not only the size of individual simplices, but the pattern by which the canonical containers intersect.

\subsection{Logical status of statements}

Because this is a synthesis paper, the labels used below are part of the exposition. Results labelled \emph{Background Theorem} or \emph{Background Proposition} are imported from earlier papers in the cycle and are not reproved here, although they are occasionally accompanied by a short proof sketch recalling the mechanism. Results labelled \emph{Theorem}, \emph{Proposition}, or \emph{Corollary} are either short consequences of those background results or compact repackagings needed for the present synthesis. Statements labelled \emph{Synthesis Principle} are conceptual consequences or organizing principles; they are not intended to assert new classification theorems. Open problems are stated explicitly in Section~\ref{sec:open}. This convention is used to keep the distinction between proved results, strict consequences, synthesis, and open interpretation visible throughout the paper.

\subsection{A map of the main objects}\label{subsec:object-map}

The paper uses several complexes and posets at once. The following table fixes the levels of the construction. It is included to make clear which objects are graphs or complexes on partitions, which objects are covers by containers, and which objects record overlaps among those containers.

\begin{table}[ht]
\centering
\small
\begin{adjustbox}{max width=\textwidth}
\begin{tabular}{lll}
\toprule
Object & What its vertices are & Role in the synthesis \\
\midrule
\(G_n\) & partitions of \(n\) & graph-level transfer geometry \\
\(K_n=\Cl(G_n)\) & partitions of \(n\) & clique complex; raw simplicial object \\
\(\mathcal C_n\) & full star/top simplices of \(K_n\) & canonical contractible containers \\
\(N_n=N(\mathcal C_n)\) & containers \(U\in\mathcal C_n\) & first overlap complex of the cover \\
\(A_\lambda\subset N_n\) & containers through \(\lambda\) & anchor simplex attached to a partition \\
\(A_S\) & containers containing all vertices of \(S\) & anchor intersection \\
\(J_n\) & nonempty anchor intersections \(A_S\) & overlap poset with short chains \\
\(\Delta(J_n)\) & elements of \(J_n\) & low-dimensional homotopy model \\
\(b_n\) & --- & bouquet rank, equal to \(\chi(K_n)-1\) \\
\bottomrule
\end{tabular}
\end{adjustbox}
\caption{A reader's map of the main objects. The passage from \(K_n\) to \(N_n\) changes vertices from partitions to containers; the passage from \(N_n\) to \(J_n\) changes the focus from containers to their anchor intersections.}
\label{tab:object-map}
\end{table}

This table also indicates where information is lost and where it is reorganized. Passing from \(K_n\) to \(N_n\) forgets the internal simplicial dimension of each full star/top container, but keeps track of which containers have common partitions. The passage from \(N_n\) to \(J_n\) preserves the anchor-intersection structure and discards redundant descriptions of the same intersection. The low-dimensionality theorem concerns this last object, not the raw dimension of \(K_n\).

\subsection{Scope and organization}

This paper is not intended as a complete reproof of the local morphology theory or of the global bouquet theorem. Those results are used here as established inputs, especially the topology, local morphology, numerical topology, and morphology papers \cite{LyudogovskiyHomotopyPartitionGraph,LyudogovskiyLocalMorphology,LyudogovskiyGrowingDiscreteGeometricObject,LyudogovskiyNumericalTopology}. Nevertheless, the exposition is written so that the reader can follow the local/global mechanism without constantly returning to the preceding papers: the local transfer formalism, the canonical cover, the anchor construction, and the intersection-poset reduction are all recalled at the level needed for the synthesis.

The aim is narrower and more synthetic: to isolate the local/global contrast, identify the overlap-control mechanism, and clarify why the qualitative topological problem reduces to the numerical computation of \(\chi(K_n)\) and \(b_n\).

For orientation, Section~\ref{sec:basic} formulates the basic contrast; Section~\ref{sec:local} recalls the local transfer mechanism; Sections~\ref{sec:containers}--\ref{sec:overlap} develop the global overlap mechanism; Section~\ref{sec:compatible} assembles the synthesis; Section~\ref{sec:numerical} records numerical consequences; Section~\ref{sec:open} lists open conceptual problems; and Section~\ref{sec:conclusion} concludes.

\section{The Basic Contrast}\label{sec:basic}

The purpose of this section is to isolate the central contrast studied in the paper. On the one hand, the partition graph \(G_n\) develops increasingly rich local and morphological structure as \(n\) grows. On the other hand, the clique complex
\[
K_n=\Cl(G_n)
\]
has a globally simple homotopy type:
\[
K_n\simeq \bigvee^{b_n}S^2,
\qquad
b_n=\chi(K_n)-1.
\]
The coexistence of these two facts has a structural explanation.

\subsection{Local simplex dimension}

For a partition \(\lambda\vdash n\), define its local simplex dimension by
\[
\dim_{\mathrm{loc}}(\lambda)
=
\max\{\dim\sigma:\sigma\subset K_n,\ \lambda\in\sigma\}.
\]
Equivalently,
\[
\dim_{\mathrm{loc}}(\lambda)=\omega_{\mathrm{loc}}(\lambda)-1,
\]
where \(\omega_{\mathrm{loc}}(\lambda)\) is the maximum size of a clique of \(G_n\) containing \(\lambda\). This is a local invariant of the vertex \(\lambda\). It measures local simplicial thickness, not global homotopy type.

The local morphology theory gives an exact formula for \(\dim_{\mathrm{loc}}(\lambda)\) in terms of ordered local transfer type \cite{LyudogovskiyLocalMorphology}. We recall only the part needed here. Let
\[
\lambda=(s_1^{m_1},\ldots,s_t^{m_t})\vdash n,
\qquad
s_1>\cdots>s_t>0,
\qquad
s_{t+1}=0.
\]
Define
\[
\alpha_i=\mathbf 1_{m_i=1},
\qquad
\beta_i=\mathbf 1_{s_i-s_{i+1}=1}.
\]
Then
\[
\dim_{\mathrm{loc}}(\lambda)
=
\max
\left\{
\max_{1\le i\le t}(t+1-\alpha_i-\beta_i),
\max_{1\le j\le t+1}(t-\alpha_j-\beta_{j-1})
\right\},
\]
with the conventions
\[
\beta_0=0,
\qquad
\alpha_{t+1}=0.
\]
This formula is one of the basic bridges between local transfer combinatorics and the visible simplicial morphology of \(G_n\).

\subsection{Unbounded local simplex dimension}

\begin{proposition}[Staircase vertices have large local simplex dimension]\label{prop:staircase}
Let
\[
T_t=\frac{t(t+1)}2
\]
and let
\[
\delta_t=(t,t-1,\ldots,2,1)\vdash T_t
\]
be the staircase partition. Then
\[
\dim_{\mathrm{loc}}(\delta_t)=t-1.
\]
Hence \(K_{T_t}\) contains a simplex of dimension \(t-1\), so \(\dim K_{T_t}\ge t-1\). In particular, the simplicial dimension of \(K_n\) is unbounded along the sequence \(n=T_t\).
\end{proposition}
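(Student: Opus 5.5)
The plan is to apply the local simplex dimension formula recalled above directly to \(\delta_t\). In the notation \(\lambda=(s_1^{m_1},\ldots,s_t^{m_t})\), the staircase has exactly \(t\) distinct parts \(s_i=t+1-i\), each with multiplicity \(m_i=1\). Hence \(\alpha_i=1\) for \(1\le i\le t\). The consecutive differences are \(s_i-s_{i+1}=1\) for \(i<t\), and also \(s_t-s_{t+1}=1-0=1\), so \(\beta_i=1\) for all \(1\le i\le t\). The conventions give \(\beta_0=0\) and \(\alpha_{t+1}=0\).

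Next I would evaluate the two inner maxima. For the first family, \(t+1-\alpha_i-\beta_i=t+1-2=t-1\) for every \(i\), so its maximum is \(t-1\). For the second family, the term \(t-\alpha_j-\beta_{j-1}\) equals:
\[
t-1 \ \text{ for } j=1 \ (\text{since } \beta_0=0),\qquad
t-2 \ \text{ for } 2\le j\le t,\qquad
t-1 \ \text{ for } j=t+1 \ (\text{since } \alpha_{t+1}=0).
\]
So the second maximum is also \(t-1\), and the formula yields \(\dim_{\mathrm{loc}}(\delta_t)=t-1\). For \(t=1\) the formula still gives \(0\), which is consistent with \(K_1\) being a single point.

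The remaining assertions follow from the definition of \(\dim_{\mathrm{loc}}\) as a maximum over simplices of \(K_n\) containing \(\lambda\). That maximum is attained, so \(K_{T_t}\) contains a simplex of dimension \(t-1\), and therefore \(\dim K_{T_t}\ge t-1\). Since \(T_t\to\infty\) and \(t-1\) is unbounded, the simplicial dimension of \(K_n\) is unbounded along \(n=T_t\).

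The only real point of care is the boundary conventions \(\beta_0=0\), \(\alpha_{t+1}=0\) and \(s_{t+1}=0\), since the extremal value comes from the endpoint terms \(j=1\) and \(j=t+1\). As an independent sanity check of the lower bound, I would exhibit the clique explicitly. Moving one unit from the part \(1\) onto any part \(k\in\{1,\ldots,t\}\) produces the partitions obtained by deleting the \(1\) and raising \(k\) to \(k+1\). Together with \(\delta_t\), this gives \(t\) partitions, and any two of them differ by a single unit transfer, so they are pairwise adjacent. This is a clique of size \(t\), confirming a simplex of dimension \(t-1\) independently of the formula.
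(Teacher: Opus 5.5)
Your proof is correct and is essentially the paper's argument. You substitute \(\alpha_i=\beta_i=1\) into the local simplex dimension formula, and the conventions \(\beta_0=0\), \(\alpha_{t+1}=0\) make the endpoint column terms attain \(t-1\) alongside all the row terms.

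One small slip in your optional sanity check: the receiving part \(k\) must range over \(\{2,\ldots,t\}\), not \(\{1,\ldots,t\}\), since the only part equal to \(1\) is the donor itself. With that correction, your count of \(t\) pairwise adjacent partitions, which form the full star-simplex at the donor block \(s_t=1\), is right.
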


\begin{proof}
For the staircase partition \(\delta_t=(t,t-1,\ldots,1)\), the support size is \(t\). Every support block is a singleton block, and every consecutive support gap is equal to \(1\). Hence
\[
\alpha_i=1,
\qquad
\beta_i=1
\qquad(1\le i\le t).
\]
Substituting into the local simplex dimension formula gives
\[
\max_{1\le i\le t}(t+1-\alpha_i-\beta_i)=t-1.
\]
For the second family of terms, using \(\beta_0=0\) and \(\alpha_{t+1}=0\), one obtains
\[
t-\alpha_j-\beta_{j-1}\le t-1
\]
for every \(j\), with equality at the endpoints. Therefore
\[
\dim_{\mathrm{loc}}(\delta_t)=t-1.
\]
Since \(\dim_{\mathrm{loc}}(\delta_t)\) is the largest dimension of a simplex of \(K_{T_t}\) containing \(\delta_t\), the complex \(K_{T_t}\) contains a simplex of dimension \(t-1\). This proves the assertion.
\end{proof}

\subsection{Global homotopy remains two-dimensional in character}

We use the following established global result as background \cite{LyudogovskiyHomotopyPartitionGraph}.

\begin{backgroundtheorem}[Global simplicity of the clique complex]\label{thm:global-simplicity}
For every \(n\ge 1\),
\[
K_n\simeq \bigvee^{b_n}S^2,
\qquad
b_n=\chi(K_n)-1.
\]
In particular,
\[
\widetilde H_i(K_n)=0
\qquad
\text{for }i\ne 2,
\]
and
\[
\widetilde H_2(K_n)\cong \mathbb Z^{b_n}.
\]
Moreover, \(K_n\) admits a homotopy model of dimension at most \(2\).
\end{backgroundtheorem}

Combining Proposition~\ref{prop:staircase} with Background Theorem~\ref{thm:global-simplicity} gives the basic local/global contrast.

\begin{theorem}[Local high-dimensionality with global bouquet type]\label{thm:contrast}
The family of clique complexes \(K_n=\Cl(G_n)\) has unbounded simplicial dimension. Indeed,
\[
\dim K_{T_t}\ge t-1
\]
for the triangular values \(T_t=t(t+1)/2\). Nevertheless, for every \(n\),
\[
K_n\simeq \bigvee^{b_n}S^2,
\qquad
b_n=\chi(K_n)-1.
\]
Thus local simplicial dimension is unbounded, while the global homotopy type is always a bouquet of \(2\)-spheres.
\end{theorem}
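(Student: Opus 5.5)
The statement of Theorem~\ref{thm:contrast} is a direct combination of two results already in hand, so the plan is simply to assemble them. I would split it into its two halves, the unbounded-dimension half and the bouquet half, and then add one remark on why they do not conflict.

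For the first half I would invoke Proposition~\ref{prop:staircase}. The staircase partition \(\delta_t\vdash T_t\) has \(\dim_{\mathrm{loc}}(\delta_t)=t-1\). By definition of \(\dim_{\mathrm{loc}}\), this means there is a simplex \(\sigma\subset K_{T_t}\) of dimension \(t-1\) containing \(\delta_t\). Concretely, \(\sigma\) is a clique of size \(t\) in \(G_{T_t}\). Since the dimension of a simplicial complex is at least the dimension of any of its simplices, we get
\[
\dim K_{T_t}\ge t-1.
\]
Letting \(t\to\infty\) along the subsequence \(n=T_t\) shows that \(\sup_n\dim K_n=\infty\), which is the unboundedness claim.

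For the second half I would invoke Background Theorem~\ref{thm:global-simplicity} verbatim. It gives \(K_n\simeq\bigvee^{b_n}S^2\) with \(b_n=\chi(K_n)-1\) for every \(n\ge1\), with the convention that an empty wedge is a point. I would also note that the identification \(b_n=\chi(K_n)-1\) is consistent with this homotopy type. Euler characteristic is a homotopy invariant, and \(\chi(\bigvee^{b}S^2)=1+b\), so the formula for \(b_n\) is forced once the bouquet type is known.

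Finally, I would add a short remark on compatibility. A \((t-1)\)-simplex is contractible, so its presence imposes no homology in degree \(t-1\). There is therefore no contradiction between \(\dim K_{T_t}\ge t-1\) and the vanishing \(\widetilde H_k(K_n)=0\) for \(k\ge3\). There is no genuine obstacle in this proof. The only point requiring care is the step from \(\dim_{\mathrm{loc}}(\delta_t)=t-1\) to the existence of an actual simplex of that dimension. This is immediate because the local simplex dimension is defined as a maximum over simplices that actually occur in the finite complex \(K_{T_t}\), so the maximum is attained.
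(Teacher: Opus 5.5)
Your proposal is correct and follows the paper's own proof: unboundedness from Proposition~\ref{prop:staircase} (the staircase \(\delta_t\vdash T_t\) gives a \((t-1)\)-simplex in \(K_{T_t}\)), and the bouquet statement quoted directly from Background Theorem~\ref{thm:global-simplicity}. Your remarks on why the maximum defining \(\dim_{\mathrm{loc}}\) is attained, and on \(\chi\bigl(\bigvee^{b}S^2\bigr)=1+b\), are harmless additions the paper leaves implicit.
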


\begin{proof}
The unboundedness of the simplicial dimension follows from Proposition~\ref{prop:staircase}. The bouquet statement is Background Theorem~\ref{thm:global-simplicity}.
\end{proof}

Theorem~\ref{thm:contrast} says that \(K_n\) need not be a two-dimensional simplicial complex. It may contain high-dimensional simplices. The theorem says instead that high-dimensional local simplices do not force high-dimensional global homology. In the present family,
\[
\widetilde H_k(K_n)=0\qquad(k\ge 3).
\]
The reason is structural. Large simplices occur inside canonical contractible pieces. The global homotopy type is controlled not by the dimensions of these pieces alone, but by the way they overlap.

\section{Local Mechanisms Behind Growing Morphology}\label{sec:local}

The preceding section isolated the basic contrast. We now examine the local side. The aim is not to reprove the local morphology theory \cite{LyudogovskiyLocalMorphology}, but to recall the mechanism by which local transfer data generate the increasingly rich visible structure of the partition graphs.

\begin{figure}[ht]
\centering
\begin{tikzpicture}[
  node distance=1.05cm and 0.9cm,
  box/.style={draw, rounded corners, align=center, inner sep=5pt, minimum height=8mm},
  arrow/.style={-Latex, thick}
]
\node[box] (type) {\(\mathfrak t(\lambda)\)};
\node[box, right=of type] (B) {\(B(\lambda)\)};
\node[box, right=of B] (L) {\(L(B(\lambda))\)};
\node[box, right=of L] (inv) {\(\deg,\omega_{\rm loc}\)\\ \(\dim_{\rm loc}\)};
\node[box, right=of inv] (morph) {morphology\\of \(G_n\)};
\draw[arrow] (type) -- (B);
\draw[arrow] (B) -- (L);
\draw[arrow] (L) -- (inv);
\draw[arrow] (inv) -- (morph);
\end{tikzpicture}
\caption{The local transfer mechanism. Ordered binary transfer data determine the bipartite transfer graph \(B(\lambda)\), hence the line-graph neighborhood \(L(B(\lambda))\), and therefore the local invariants that feed the morphology of \(G_n\).}
\label{fig:local-pipeline}
\end{figure}

\subsection{Local transfer type and the local transfer graph}

Let
\[
\lambda=(s_1^{m_1},\ldots,s_t^{m_t})\vdash n,
\qquad
s_1>\cdots>s_t>0,
\qquad
s_{t+1}=0.
\]
The support size is \(\sigma(\lambda)=t\). A neighbor of \(\lambda\) is obtained by moving one unit from one part to another part, allowing the creation of a new part of size \(1\), and then reordering. The local neighborhood is governed by two kinds of obstructions: whether a support block is a singleton block, and whether two adjacent support values differ by \(1\). Thus
\[
\alpha_i=\mathbf 1_{m_i=1},
\qquad
\beta_i=\mathbf 1_{s_i-s_{i+1}=1}.
\]
The ordered local transfer type is
\[
\mathfrak t(\lambda)=(t;\alpha_1,\ldots,\alpha_t;\beta_1,\ldots,\beta_t).
\]

The local transfer theory can be expressed through a bipartite graph. Let the left side
\[
d_1,\ldots,d_t
\]
represent possible donor support blocks, and let the right side
\[
r_1,\ldots,r_{t+1}
\]
represent possible recipient levels, including the zero level from which a new part of size \(1\) may be created. Before local identifications, one obtains the complete bipartite graph \(K_{t,t+1}\). A nominal edge \(d_ir_j\) records a donor--recipient pair for an elementary transfer: one unit is removed from a part in the support block \(s_i\), inserted at the recipient level indexed by \(j\), and the resulting partition is reordered.

The passage from the nominal graph \(K_{t,t+1}\) to the actual local transfer graph has exactly two local deletion rules. If the donor block \(s_i\) consists of a single part, then the diagonal nominal edge \(d_ir_i\) is deleted. If the gap \(s_i-s_{i+1}\) is equal to \(1\), then the successor nominal edge \(d_ir_{i+1}\) is deleted. These are precisely the degeneracies recorded by \(\alpha_i\) and \(\beta_i\). Denote the resulting bipartite graph by \(B(\lambda)\).

The graph \(B(\lambda)\) is a compact local encoding of the elementary transfer rule near \(\lambda\). Its surviving edges are the admissible elementary transfers out of \(\lambda\). A key uniqueness observation is that two distinct admissible transfers from the same partition yield distinct neighboring partitions. Indeed, if
\[
\lambda(c\to a)=\lambda(d\to b),
\]
then in conjugate coordinates
\[
-e_{\operatorname{col}(c)}+e_{\operatorname{col}(a)}
=
-e_{\operatorname{col}(d)}+e_{\operatorname{col}(b)}.
\]
Admissibility excludes \(\operatorname{col}(c)=\operatorname{col}(a)\) and \(\operatorname{col}(d)=\operatorname{col}(b)\). Hence the negative and positive coordinates are uniquely determined, so
\[
\operatorname{col}(c)=\operatorname{col}(d),
\qquad
\operatorname{col}(a)=\operatorname{col}(b).
\]
Since removable and addable corners are uniquely determined by their column numbers, \(c=d\) and \(a=b\). This is Lemma~3.2 and Corollary~3.3 of the local morphology paper \cite{LyudogovskiyLocalMorphology}. Consequently, the surviving edges of \(B(\lambda)\) are in bijection with the neighbors of \(\lambda\), and compatibility among these transfers is governed by the line graph \(L(B(\lambda))\).

The following background proposition is quoted from \cite{LyudogovskiyLocalMorphology}.

\begin{backgroundproposition}[Local transfer type determines local clique geometry]\label{prop:local-transfer}
The ordered local transfer type \(\mathfrak t(\lambda)\) determines the local neighborhood graph \(G_n[N_{G_n}(\lambda)]\), the degree \(\deg(\lambda)\), the local clique number \(\omega_{\mathrm{loc}}(\lambda)\), and the local simplex dimension \(\dim_{\mathrm{loc}}(\lambda)\). More precisely,
\[
G_n[N_{G_n}(\lambda)]\cong L(B(\lambda)),
\]
where \(B(\lambda)\) is obtained from \(K_{t,t+1}\) by the deletion rules described above. Consequently,
\[
\deg(\lambda)=t(t+1)-\sum_{i=1}^t\alpha_i-\sum_{i=1}^t\beta_i,
\]
and
\[
\dim_{\mathrm{loc}}(\lambda)
=
\max
\left\{
\max_{1\le i\le t}(t+1-\alpha_i-\beta_i),
\max_{1\le j\le t+1}(t-\alpha_j-\beta_{j-1})
\right\},
\]
with \(\beta_0=0\) and \(\alpha_{t+1}=0\).
\end{backgroundproposition}

The formulas have a useful internal interpretation. The degree \(\deg(\lambda)\) is the number of surviving edges of \(B(\lambda)\). The two families of terms in the local simplex dimension formula are the row and column degrees of \(B(\lambda)\):
\[
\deg_{B(\lambda)}(d_i)=t+1-\alpha_i-\beta_i,
\]
and
\[
\deg_{B(\lambda)}(r_j)=t-\alpha_j-\beta_{j-1},
\]
with \(\beta_0=0\) and \(\alpha_{t+1}=0\). Since cliques in a line graph come from stars in the underlying graph, a large row or column star in \(B(\lambda)\) gives a large clique in the neighborhood of \(\lambda\). Adding \(\lambda\) itself gives the corresponding simplex of \(K_n\). In particular,
\[
\dim_{\mathrm{loc}}(\lambda)=\max\{\text{row and column degrees of }B(\lambda)\}.
\]

The importance of this proposition is that local complexity is reduced to finite binary data. The local neighborhood of a partition is not controlled by the full numerical size of its parts, but by the pattern \((t;\alpha;\beta)\). Thus many partitions with different part sizes may share the same local transfer type and hence the same local neighborhood geometry.

\subsection{A worked local example}\label{subsec:worked-local-example}

The formulas above are compact, but it is useful to see explicitly how the binary transfer data produce local geometry. Consider
\[
\lambda=(4,2,1)\vdash 7.
\]
Here the support values are
\[
s_1=4,\qquad s_2=2,\qquad s_3=1,
\]
so \(t=3\). All support blocks are singleton blocks, hence
\[
\alpha=(1,1,1).
\]
The gaps are
\[
s_1-s_2=2,\qquad s_2-s_3=1,\qquad s_3-s_4=1,
\]
where \(s_4=0\). Thus
\[
\beta=(0,1,1).
\]
The local transfer type is therefore
\[
\mathfrak t(4,2,1)=(3;1,1,1;0,1,1).
\]

Before degeneracies, the nominal local transfer graph is \(K_{3,4}\). The three singleton-block obstructions delete the diagonal edges
\[
d_1r_1,\qquad d_2r_2,\qquad d_3r_3,
\]
and the two unit-gap obstructions delete the successor edges
\[
d_2r_3,\qquad d_3r_4.
\]
The remaining seven edges form \(B(\lambda)\), shown schematically in Figure~\ref{fig:worked-local-B}.

\begin{figure}[ht]
\centering
\begin{tikzpicture}[scale=0.95,
  donor/.style={circle,draw,inner sep=2pt,minimum size=7mm},
  rec/.style={circle,draw,inner sep=2pt,minimum size=7mm},
  live/.style={thick},
  dead/.style={densely dashed,gray}
]
\node[donor] (d1) at (0,2) {\(d_1\)};
\node[donor] (d2) at (0,1) {\(d_2\)};
\node[donor] (d3) at (0,0) {\(d_3\)};
\node[rec] (r1) at (4,2.5) {\(r_1\)};
\node[rec] (r2) at (4,1.65) {\(r_2\)};
\node[rec] (r3) at (4,0.8) {\(r_3\)};
\node[rec] (r4) at (4,-0.05) {\(r_4\)};

% live edges
\draw[live] (d1) -- (r2);
\draw[live] (d1) -- (r3);
\draw[live] (d1) -- (r4);
\draw[live] (d2) -- (r1);
\draw[live] (d2) -- (r4);
\draw[live] (d3) -- (r1);
\draw[live] (d3) -- (r2);

% deleted edges
\draw[dead] (d1) -- (r1);
\draw[dead] (d2) -- (r2);
\draw[dead] (d3) -- (r3);
\draw[dead] (d2) -- (r3);
\draw[dead] (d3) -- (r4);

\node[align=center] at (2,-0.85) {solid edges survive in \(B(\lambda)\); dashed edges are deleted by \(\alpha\) or \(\beta\)};
\end{tikzpicture}
\caption{The local transfer graph for \(\lambda=(4,2,1)\). The graph starts from \(K_{3,4}\). The deleted diagonal edges come from singleton support blocks, and the deleted successor edges come from unit gaps. The seven surviving edges give \(\deg(\lambda)=7\).}
\label{fig:worked-local-B}
\end{figure}

The degree formula gives
\[
\deg(4,2,1)=3\cdot4-(1+1+1)-(0+1+1)=7,
\]
which is exactly the number of surviving edges in Figure~\ref{fig:worked-local-B}.

For the local simplex dimension, the row terms are
\[
\begin{array}{c|ccc}
 i & 1 & 2 & 3\\
\hline
 t+1-\alpha_i-\beta_i & 3 & 2 & 2
\end{array}
\]
and the column terms, with \(\beta_0=0\) and \(\alpha_4=0\), are
\[
\begin{array}{c|cccc}
 j & 1 & 2 & 3 & 4\\
\hline
 t-\alpha_j-\beta_{j-1} & 2 & 2 & 1 & 2
\end{array}.
\]
Thus
\[
\dim_{\mathrm{loc}}(4,2,1)=3.
\]
The row \(d_1\) has three surviving incident transfer edges; in the line graph this gives a clique of three neighbors of \(\lambda\), and together with \(\lambda\) it gives a \(3\)-simplex in \(K_7\).

This example illustrates the general local mechanism for a small partition. The support size \(t=3\) creates the nominal scale \(t(t+1)=12\). The singleton and unit-gap data delete five local transfers, leaving degree \(7\). The largest surviving row or column star determines the local simplex dimension. The same calculation, performed across all partitions of \(n\), produces degree layers, simplex layers, and transition regions in \(G_n\).

\subsection{Degree landscape and simplex layers}

The degree formula, further analyzed in \cite{LyudogovskiyDegreeLandscape,LyudogovskiyDegreeTheory},
\[
\deg(\lambda)=t(t+1)-\sum_i\alpha_i-\sum_i\beta_i
\]
shows that degree is governed by support size and by two local obstruction counts: singleton support blocks and unit gaps. Thus the degree landscape is not random. It is stratified by support size and refined by the two binary obstruction patterns.

\begin{proposition}[Degree layers are transfer-type strata]\label{prop:degree-layers}
The degree function \(\deg:V(G_n)\to\mathbb Z\) is constant on ordered local transfer types. Consequently, degree layers in \(G_n\) are unions of transfer-type strata.
\end{proposition}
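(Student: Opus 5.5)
The statement follows almost immediately from Background Proposition~\ref{prop:local-transfer}, so the plan is to make the functional dependence explicit rather than to prove anything new. First I will read off from that proposition the closed formula
\[
\deg(\lambda)=t(t+1)-\sum_{i=1}^t\alpha_i-\sum_{i=1}^t\beta_i .
\]
The right-hand side is a function only of the tuple \(\mathfrak t(\lambda)=(t;\alpha_1,\ldots,\alpha_t;\beta_1,\ldots,\beta_t)\). Hence if \(\mathfrak t(\lambda)=\mathfrak t(\mu)\) for two partitions \(\lambda,\mu\vdash n\), then the data \(t,\alpha,\beta\) coincide and therefore \(\deg(\lambda)=\deg(\mu)\).

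As an independent check, I will also argue through the graph rather than the formula. The type \(\mathfrak t(\lambda)\) determines \(B(\lambda)\), since it specifies \(K_{t,t+1}\) together with exactly which diagonal edges \(d_ir_i\) and successor edges \(d_ir_{i+1}\) are deleted. By the uniqueness observation (Lemma~3.2 and Corollary~3.3 of \cite{LyudogovskiyLocalMorphology}), the surviving edges of \(B(\lambda)\) are in bijection with the neighbours of \(\lambda\). So \(\deg(\lambda)=|E(B(\lambda))|\), which is again an invariant of the type.

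For the second sentence, I will define the transfer-type strata as the fibres \(\mathfrak t^{-1}(\tau)\subset V(G_n)\), as \(\tau\) ranges over the realised types. These fibres partition \(V(G_n)\). The degree layer \(\{\lambda:\deg(\lambda)=d\}\) then equals the union of those fibres \(\mathfrak t^{-1}(\tau)\) whose common degree value \(t(t+1)-\sum\alpha-\sum\beta\) equals \(d\). This holds because each fibre lies entirely inside a single layer, and every vertex lies in some fibre.

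There is no real obstacle here. The only point needing care is that the degree formula counts each admissible transfer exactly once, with no two transfers producing the same neighbour. That is precisely what the cited uniqueness corollary guarantees, so I would simply invoke it.
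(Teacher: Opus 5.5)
Your proposal is correct and follows the paper's proof, which simply observes that the formula \(\deg(\lambda)=t(t+1)-\sum_i\alpha_i-\sum_i\beta_i\) from Background Proposition~\ref{prop:local-transfer} depends only on \(\mathfrak t(\lambda)\). Your second check via \(|E(B(\lambda))|\) and your explicit fibre argument for the ``union of strata'' clause are valid elaborations of the same idea.
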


\begin{proof}
This follows immediately from the degree formula in Background Proposition~\ref{prop:local-transfer}.
\end{proof}

The same mechanism controls local simplex dimension. Large local simplices appear when there are rows or columns in \(B(\lambda)\) with many surviving incident edges. Equivalently, local thickness is produced by large families of elementary transfers that remain mutually compatible as cliques. For \(r\ge 0\), define the threshold simplex layer
\[
T_{\ge r}(n)=\{\lambda\vdash n:\dim_{\mathrm{loc}}(\lambda)\ge r\}.
\]

\begin{proposition}[Simplex layers are local transfer inequalities]\label{prop:simplex-layers}
For each \(r\ge 0\), the simplex layer \(T_{\ge r}(n)\) is determined by inequalities in the ordered local transfer type:
\[
\max
\left\{
\max_i(t+1-\alpha_i-\beta_i),
\max_j(t-\alpha_j-\beta_{j-1})
\right\}
\ge r.
\]
Thus the simplex-layer structure of \(G_n\) is induced by the distribution of local transfer types across \(\Par(n)\); this is the local form of the simplex-layer and phase-boundary framework developed in \cite{LyudogovskiySimplexStratification,LyudogovskiySimplexLayers}.
\end{proposition}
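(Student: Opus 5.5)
The plan is to derive Proposition~\ref{prop:simplex-layers} directly from the local simplex dimension formula of Background Proposition~\ref{prop:local-transfer}, in the same way that Proposition~\ref{prop:degree-layers} followed from the degree formula. Recall that
\[
T_{\ge r}(n)=\{\lambda\vdash n:\dim_{\mathrm{loc}}(\lambda)\ge r\}.
\]
For each \(\lambda\vdash n\) with ordered local transfer type \(\mathfrak t(\lambda)=(t;\alpha;\beta)\), we substitute the identity
\[
\dim_{\mathrm{loc}}(\lambda)=\max\Bigl\{\max_i(t+1-\alpha_i-\beta_i),\ \max_j(t-\alpha_j-\beta_{j-1})\Bigr\}
\]
(with \(\beta_0=0\), \(\alpha_{t+1}=0\)) into this definition. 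This shows that \(\lambda\in T_{\ge r}(n)\) if and only if the stated inequality holds for \(\mathfrak t(\lambda)\). Equivalently, it holds if and only if some row or column of \(B(\lambda)\) has degree at least \(r\).

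The second step is the structural conclusion. The inequality depends only on the finite binary data \((t;\alpha;\beta)\), so the predicate ``\(\lambda\in T_{\ge r}(n)\)'' factors through the map \(\lambda\mapsto\mathfrak t(\lambda)\). Hence
\[
T_{\ge r}(n)=\bigcup_{\tau\in\mathcal T_r}\{\lambda\vdash n:\mathfrak t(\lambda)=\tau\},
\]
where \(\mathcal T_r\) is the set of ordered types satisfying the inequality. So each simplex layer is a union of transfer-type strata, and the layered structure of \(G_n\) is induced by the distribution of types over \(\Par(n)\). The exact layers \(T_{\ge r}(n)\setminus T_{\ge r+1}(n)\) inherit the same description.

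There is no real obstacle here. The only point needing care is the boundary conventions \(\beta_0=0\) and \(\alpha_{t+1}=0\) in the column terms. These must match the column degrees \(\deg_{B(\lambda)}(r_j)\) recorded after Background Proposition~\ref{prop:local-transfer}: the recipient \(r_1\) has no predecessor gap, and the zero level \(r_{t+1}\) is never a singleton obstruction. Since the formula is quoted as background, we only check that the inequality is stated with these same conventions. The reference to the phase-boundary framework is interpretive and requires no proof.
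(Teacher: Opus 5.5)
Your proposal is correct and follows the paper's route. The paper proves this in one line, as a direct restatement of the local simplex dimension formula from Background Proposition~\ref{prop:local-transfer}, which is your substitution step. Your additional observations (that membership in \(T_{\ge r}(n)\) factors through \(\mathfrak t(\lambda)\), giving a union of transfer-type strata, and the check of the conventions \(\beta_0=0\), \(\alpha_{t+1}=0\)) are harmless elaborations of the same argument.
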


\begin{proof}
This is a direct restatement of the local simplex dimension formula.
\end{proof}

\subsection{Support jumps, anisotropy, and morphogenesis}

The support size \(\sigma(\lambda)=t\) is one of the most basic morphological coordinates on \(G_n\). It controls the leading term of degree and sets the scale for possible local clique dimensions. However, support size is not locally constant. An elementary transfer may preserve support size, increase support size by creating a new part, decrease support size by eliminating a singleton part, or change the support profile by merging with an adjacent value. Thus edges of \(G_n\) carry support jumps, in the sense studied in the support and jump-invariant papers \cite{LyudogovskiySupportJumps,LyudogovskiyJumpGradient}:
\[
\Delta_{\lambda\mu}\sigma=\sigma(\mu)-\sigma(\lambda),
\qquad
\lambda\mu\in E(G_n).
\]
These jumps mark transitions between local structural regimes.

More generally, let \(F:V(G_n)\to A\) be a vertex invariant, such as degree, support size, local simplex dimension, distance from the axis, distance from the spine, or distance from the boundary. The fibers \(F^{-1}(a)\) are morphological layers associated with \(F\). For a tuple \(\Phi_n=(F_1,\ldots,F_m)\), the fibers of \(\Phi_n\) give refined morphological strata, and the edgewise jump vector
\[
\Delta_{\lambda\mu}\Phi_n=\Phi_n(\mu)-\Phi_n(\lambda)
\]
records transitions between strata.

\begin{synthesisprinciple}[Morphology as fibers and jumps]\label{prin:fibers-jumps}
Many morphological structures studied in \(G_n\) can be organized as fibers and jump loci of vertex invariants. Degree layers are fibers of \(\deg\), simplex layers are upper fibers of \(\dim_{\mathrm{loc}}\), support strata are fibers of \(\sigma\), and phase boundaries are supported on edges where the relevant invariant changes.
\end{synthesisprinciple}

Directional anisotropy can be studied through non-uniform distributions of such edgewise jumps \cite{LyudogovskiyDirectionalGeometry}. Transfers may move toward or away from special regions such as the self-conjugate axis, the spine, the rear, the boundary, or the central zone. Thus the graph is not merely a collection of vertices with degrees and cliques; it has preferred transition patterns between regions.

As \(n\) grows, new support sizes, obstruction patterns, degree values, local simplex dimensions, support-jump patterns, and directional regimes appear \cite{LyudogovskiyMorphogenesis}. This is the morphogenetic side of the family \(G_1,G_2,\ldots\). The staircase example from Proposition~\ref{prop:staircase} is one clean witness: new values of \(n\) permit genuinely larger local clique configurations.

\begin{synthesisprinciple}[Morphogenesis from expanding transfer types]\label{prin:morphogenesis}
The observed morphological complexity of \(G_n\) is associated with the expanding range and distribution of ordered local transfer types as \(n\) grows. New support sizes and new obstruction patterns make new local neighborhood geometries available, and these in turn support new degree regimes, simplex layers, jump patterns, and regional structures. This principle does not assert monotonicity of any fixed numerical measure of complexity.
\end{synthesisprinciple}

\subsection{A local-to-morphology dictionary}\label{subsec:local-dictionary}

For reference, Table~\ref{tab:local-dictionary} summarizes the local dictionary used in this synthesis. The table is not a new classification theorem. Its purpose is to make explicit which local quantities are theorem-level consequences of the transfer-type formalism, and which larger morphological objects are obtained by distributing those quantities across the graph.

\begin{table}[ht]
\centering
\small
\begin{adjustbox}{max width=\textwidth}
\begin{tabular}{lll}
\toprule
Local or regional object & Immediate input & Morphological role \\
\midrule
support size \(\sigma(\lambda)=t\) & support of \(\lambda\) & scale for degree and local thickness \\
singleton data \(\alpha_i\) & multiplicities \(m_i\) & deletes diagonal local transfers \\
unit-gap data \(\beta_i\) & gaps \(s_i-s_{i+1}\) & deletes successor local transfers \\
transfer graph \(B(\lambda)\) & \((t;\alpha;\beta)\) & local compatibility pattern \\
neighborhood graph \(L(B(\lambda))\) & \(B(\lambda)\) & adjacency among neighbors of \(\lambda\) \\
degree \(\deg(\lambda)\) & number of surviving transfer edges & degree landscape \\
local simplex dimension & largest surviving row/column star & simplex layers and thickness \\
support jumps & edgewise change of \(\sigma\) & transitions between support regimes \\
jump vectors of invariants & edgewise changes of \(\Phi_n\) & phase boundaries and anisotropy \\
axis, spine, rear, center & regional/distance data & large-scale morphology of \(G_n\) \\
\bottomrule
\end{tabular}
\end{adjustbox}
\caption{A local-to-morphology dictionary. The first six rows are directly tied to the ordered local transfer-type mechanism. The later rows organize how these local data vary across \(G_n\) and interact with regional structures.}
\label{tab:local-dictionary}
\end{table}

The distinction in the last column is important. Local transfer type explains the geometry seen from a single vertex. Morphology is produced only after these local types and their associated invariants are distributed over the entire vertex set and compared along edges. Thus the local theory gives the alphabet, while the morphology of \(G_n\) is the grammar formed by arranging this alphabet across the graph.

This also clarifies the role of examples such as \((4,2,1)\). A single example illustrates how the local alphabet is computed, but it does not by itself explain a global region of \(G_n\). A degree layer, a simplex layer, a support-jump boundary, or an anisotropic regime appears only after the same local calculation is performed over many vertices and the resulting values are compared along the edges of the graph. This distinction clarifies a common point of confusion: local transfer formulas are pointwise theorems, whereas morphology emerges from their distribution across the graph.

\section{From Local Cliques to Global Containers}\label{sec:containers}

We now pass from local cliques to their global organization inside \(K_n\), using the star/top clique classification from the topological paper \cite{LyudogovskiyHomotopyPartitionGraph}. The first observation is that cliques in \(G_n\) are not arbitrary. Although large cliques occur, they occur inside highly structured canonical containers. These containers are the full star-simplices and full top-simplices.

The names are compatible with the local line-graph picture. Near a vertex \(\lambda\), cliques in
\[
G_n[N_{G_n}(\lambda)]\cong L(B(\lambda))
\]
come from row and column stars in the bipartite graph \(B(\lambda)\). A row star fixes a donor support block and varies the recipient level; a column star fixes a recipient level and varies the donor support block. The global star/top simplices are the canonical completions of these two local compatibility patterns inside the whole clique complex. This is why the local calculation of \(\dim_{\mathrm{loc}}\) and the global star/top cover are parts of the same mechanism rather than unrelated constructions.

\subsection{Full star/top simplices}

We recall the definition at the level needed here. Let \(\lambda\vdash n\). If \(c\) is a removable corner of \(\lambda\), define
\[
A_{\max}(\lambda,c)
=
\{a:\lambda(c\to a)\text{ is an admissible transfer}\}.
\]
The associated full star-simplex is
\[
\Sigma^{\mathrm{star}}_{\max}(\lambda,c)
=
\{\lambda\}\cup\{\lambda(c\to a):a\in A_{\max}(\lambda,c)\}.
\]
Thus a full star-simplex fixes the removable corner and allows all admissible addable corners. Dually, if \(a\) is an addable corner of \(\lambda\), define
\[
C_{\max}(\lambda,a)
=
\{c:\lambda(c\to a)\text{ is an admissible transfer}\},
\]
and the associated full top-simplex is
\[
\Sigma^{\mathrm{top}}_{\max}(\lambda,a)
=
\{\lambda\}\cup\{\lambda(c\to a):c\in C_{\max}(\lambda,a)\}.
\]
Thus a full top-simplex fixes the addable corner and allows all admissible removable corners. These are the full versions of the two local line-graph clique mechanisms: row stars and column stars in \(B(\lambda)\). Low-dimensional cases are allowed; in particular, a full star/top simplex may be only an edge. If two parameter choices give the same simplex, it is counted only once in the cover.

For each \(n\), let
\[
\mathcal C_n
\]
denote the collection of all distinct full star-simplices and full top-simplices in \(K_n\), as in the canonical-cover construction of~\cite[Definitions~2.3--2.5]{LyudogovskiyHomotopyPartitionGraph}. An element \(U\in\mathcal C_n\) is therefore a simplex of \(K_n\), hence contractible. It should be thought of as a canonical container for one coherent family of mutually compatible local transfers.

\begin{backgroundtheorem}[Star/top containment of cliques]\label{thm:star-top-containment}
Every clique of \(G_n\), equivalently every simplex of \(K_n\), is contained in a full star-simplex or in a full top-simplex. Equivalently,
\[
K_n=\bigcup_{U\in\mathcal C_n}U.
\]
Thus \(\mathcal C_n\) is a canonical cover of \(K_n\) by full star/top simplices.
\end{backgroundtheorem}

This theorem is one of the central combinatorial inputs behind the global topology. It says that even though \(K_n\) may contain many simplices of large dimension, these simplices are not arranged freely. They are contained in a controlled family of canonical contractible pieces.

\begin{proposition}[Large local simplices are contained in contractible canonical pieces]\label{prop:containers}
Let \(\sigma\subset K_n\) be any simplex. Then there exists \(U\in\mathcal C_n\) such that \(\sigma\subseteq U\). In particular, every local simplex of \(K_n\), regardless of its dimension, is contained in a contractible full star/top simplex.
\end{proposition}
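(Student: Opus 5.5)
This is essentially a direct consequence of Background Theorem~\ref{thm:star-top-containment}, so the plan is to invoke it and then add the contractibility observation. Given a simplex \(\sigma\subset K_n\), its vertex set is a clique of \(G_n\), since \(K_n=\Cl(G_n)\). By Background Theorem~\ref{thm:star-top-containment}, that clique lies in some full star-simplex \(\Sigma^{\mathrm{star}}_{\max}(\lambda,c)\) or some full top-simplex \(\Sigma^{\mathrm{top}}_{\max}(\lambda,a)\). Each of these is, by definition, a member of the collection \(\mathcal C_n\); the repeated-parameter convention only identifies equal simplices and does not affect membership. This produces the required \(U\in\mathcal C_n\) with \(\sigma\subseteq U\).

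For the contractibility clause, we first check that each \(U\in\mathcal C_n\) really is a simplex of \(K_n\), meaning its vertex set is a clique of \(G_n\). This was asserted when \(\mathcal C_n\) was introduced, and it is the point that would need justification if we did not rely on the cited canonical-cover construction. For a full star-simplex, \(\lambda\) is adjacent to every \(\lambda(c\to a)\) by construction. Two members \(\lambda(c\to a)\) and \(\lambda(c\to a')\) differ by moving one unit from \(a\) to \(a'\) in conjugate coordinates, so they are adjacent; this is the row-star clique in \(L(B(\lambda))\) from Background Proposition~\ref{prop:local-transfer}. The top-simplex case is dual, using column stars. Granting this, \(U\) is the closed simplex on a finite vertex set, and its geometric realization is a geometric simplex, which is convex and hence contractible.

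Finally, the phrase ``regardless of its dimension'' needs no separate treatment, since the argument never uses \(\dim\sigma\). For example, the \((t-1)\)-simplex at the staircase vertex \(\delta_t\) from Proposition~\ref{prop:staircase} is one such \(\sigma\). The only real content, and the only potential obstacle, is the containment statement itself, and that is imported from Background Theorem~\ref{thm:star-top-containment}.
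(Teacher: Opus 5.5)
Your proposal is correct and follows the paper's own proof: you regard \(\sigma\) as a clique of \(G_n\), apply Background Theorem~\ref{thm:star-top-containment} to obtain a full star/top container \(U\in\mathcal C_n\) with \(\sigma\subseteq U\), and conclude that \(U\) is contractible because it is a simplex. Your additional check that every full star/top simplex is a clique, via row/column stars of \(B(\lambda)\) or the single-unit difference in conjugate coordinates, is something the paper takes as part of the definition of \(\mathcal C_n\). The check is correct, and including it does no harm.
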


\begin{proof}
The simplex \(\sigma\) is a clique in \(G_n\). By Background Theorem~\ref{thm:star-top-containment}, this clique is contained in a full star-simplex or a full top-simplex. This full star/top simplex is an element of \(\mathcal C_n\), and it is contractible because it is a simplex.
\end{proof}

\subsection{A schematic container picture}\label{subsec:container-picture}

The container viewpoint is easiest to read as a two-level description. At the first level, one sees many simplices of \(K_n\), possibly of large dimension. At the second level, each full star/top container is treated as one vertex of the nerve. Its internal dimension is forgotten by the nerve; what survives is whether it intersects other containers.

\begin{figure}[ht]
\centering
\begin{tikzpicture}[
  scale=1,
  container/.style={draw, rounded corners, minimum width=3.2cm, minimum height=1.55cm, align=center},
  small/.style={circle,fill=black,inner sep=1.25pt},
  nvert/.style={circle,draw,inner sep=2pt,minimum size=8mm},
  arrow/.style={-Latex, thick}
]
% upper left: one large container with internal dots
\node[container] (Ubox) at (0,0) {};
\node at (0,0.43) {large simplex};
\node at (0,0.12) {\(U\in\mathcal C_n\)};
\node[small] at (-0.9,-0.32) {};
\node[small] at (-0.45,-0.45) {};
\node[small] at (0.05,-0.30) {};
\node[small] at (0.55,-0.43) {};
\node[small] at (0.95,-0.25) {};
\node at (0,-1.05) {internal clique geometry};

% upper right: one nerve vertex
\node[nvert] (Nu) at (4.45,0) {\(U\)};
\node at (4.45,-1.05) {one nerve vertex};
\draw[arrow] (1.75,0) -- (3.85,0);

% lower left: two overlapping containers
\node[container, minimum width=2.45cm, minimum height=1.15cm] (U1box) at (-0.35,-3.0) {};
\node[container, minimum width=2.45cm, minimum height=1.15cm] (U2box) at (0.95,-3.0) {};
\node at (-0.8,-3.0) {\(U_1\)};
\node at (1.4,-3.0) {\(U_2\)};
\node at (0.3,-3.9) {overlap in \(K_n\)};

% lower right: edge in nerve
\node[nvert] (N1) at (4.0,-2.85) {\(U_1\)};
\node[nvert] (N2) at (5.45,-2.85) {\(U_2\)};
\draw[thick] (N1) -- (N2);
\node at (4.72,-3.9) {edge in \(N_n\)};
\draw[arrow] (2.25,-3.0) -- (3.5,-2.88);
\end{tikzpicture}
\caption{The container viewpoint. A full star/top simplex may be large, but it becomes a single vertex of the nerve. Global topology is governed by intersections among containers, not by their internal dimensions alone.}
\label{fig:container-picture}
\end{figure}

This schematic picture should not be confused with an additional construction. It is simply the nerve viewpoint applied to the canonical cover \(\mathcal C_n\). It explains why the next section concentrates on intersections among containers rather than on the raw list of all simplices of \(K_n\).

\subsection{Containers absorb local clique abundance}

A high-dimensional simplex in \(K_n\) may arise because a partition \(\lambda\) has many mutually compatible elementary transfers around it. But by Proposition~\ref{prop:containers}, this simplex lies inside some \(U\in\mathcal C_n\). Since \(U\) is contractible, a large clique contributes local thickness, but not by itself global homotopical complexity.

\begin{synthesisprinciple}[Local dimension is not the controlling global invariant]\label{prin:local-dim-not-global}
The simplicial dimension of \(K_n\) measures the size of the largest clique in \(G_n\). It is a genuine local-combinatorial quantity. However, the global homotopy type of \(K_n\) is controlled by the overlap pattern of the canonical cover \(\mathcal C_n\). Thus local simplex dimension contributes to the morphology of \(G_n\), but it is not the invariant that determines the qualitative homotopy type of \(K_n\).
\end{synthesisprinciple}

The cover \(\mathcal C_n\) is the interface between the local-morphological and global-topological sides of the theory. Locally, its elements are full star/top containers generated by compatible transfer families. Globally, their intersections define the nerve
\[
N_n=N(\mathcal C_n).
\]
Thus the passage
\[
K_n\quad\leadsto\quad \mathcal C_n\quad\leadsto\quad N_n
\]
replaces raw simplicial complexity by overlap combinatorics.

\section{The Overlap-Control Mechanism}\label{sec:overlap}

We now turn to the global side of the synthesis. Sections~\ref{sec:basic}--\ref{sec:containers} showed that local clique abundance is real, structured, and unbounded, but that every simplex is contained in a full star/top container. The global question is therefore not the size of individual simplices, but the way these canonical containers overlap.

This section recalls the overlap-control mechanism in a self-contained form. The detailed star/top classification and the original proofs are given in the topological paper \cite{LyudogovskiyHomotopyPartitionGraph}; here we include enough of the construction to make the present synthesis readable on its own. The mechanism has two reductions:
\[
K_n\simeq N_n
\]
by the canonical star/top cover, and
\[
K_n\simeq \Delta(J_n)
\]
by a closure-operator construction on the anchor-intersection poset. The second reduction is the one that produces the low-dimensional model.

\begin{figure}[ht]
\centering
\begin{tikzpicture}[
  node distance=1.1cm and 0.85cm,
  box/.style={draw, rounded corners, align=center, inner sep=5pt, minimum height=8mm},
  arrow/.style={-Latex, thick}
]
\node[box] (K) {\(K_n\)};
\node[box, right=of K] (C) {\(\mathcal C_n\)};
\node[box, right=of C] (N) {\(N_n\)};
\node[box, right=of N] (A) {\(\mathcal A_n\)};
\node[box, right=of A] (J) {\(J_n\)};
\node[box, right=of J] (D) {\(\Delta(J_n)\)};
\draw[arrow] (K) -- (C);
\draw[arrow] (C) -- (N);
\draw[arrow] (N) -- (A);
\draw[arrow] (A) -- (J);
\draw[arrow] (J) -- (D);
\node[below=0.75cm of N, align=center] (eq) {\(K_n\simeq N_n\),\quad \(K_n\simeq \Delta(J_n)\),\quad \(\dim\Delta(J_n)\le 2\)};
\end{tikzpicture}
\caption{The overlap-control mechanism. The canonical cover by full star/top simplices gives \(K_n\simeq N_n\). The anchor-cover data then define an intersection poset \(J_n\), whose order complex is homotopy equivalent to \(K_n\) and has dimension at most \(2\).}
\label{fig:overlap-pipeline}
\end{figure}

Before entering the details, it is useful to emphasize the direction of the reduction. The complex \(K_n\) is not replaced by a smaller subcomplex. Instead, it is replaced by successive combinatorial models that remember only the data relevant for homotopy. First, the canonical cover replaces individual cliques by containers and records how containers meet. Then the anchor construction records which original partitions force which container intersections. Finally, the closure operator removes redundant anchor descriptions and produces the poset \(J_n\). The final model \(\Delta(J_n)\) is therefore not a geometric subcomplex of \(K_n\); it is a homotopy-equivalent overlap model.

\begin{table}[ht]
\centering
\small
\begin{adjustbox}{max width=\textwidth}
\begin{tabular}{llll}
\toprule
Stage & Input & Output & What is kept \\
\midrule
canonical cover & simplices of \(K_n\) & containers \(\mathcal C_n\) & star/top organization \\
nerve & cover \(\mathcal C_n\) & \(N_n\) & which containers intersect \\
anchors & partitions \(\lambda\) & simplices \(A_\lambda\subset N_n\) & containers passing through \(\lambda\) \\
anchor intersections & sets \(S\subset\Par(n)\) & \(A_S\) & containers containing all of \(S\) \\
closure & simplices \(S\in P_n\) & \(\operatorname{cl}_{\mathcal A}(S)\) & vertices forced by \(A_S\) \\
intersection poset & anchor intersections & \(J_n\) & nonredundant overlap data \\
order complex & poset \(J_n\) & \(\Delta(J_n)\) & chains of overlaps \\
\bottomrule
\end{tabular}
\end{adjustbox}
\caption{The internal steps of the overlap-control mechanism. The key point is that the homotopy type is preserved while the data are reorganized from raw simplices to controlled overlap chains.}
\label{tab:overlap-stages}
\end{table}

\subsection{The canonical cover and the first nerve}

Let \(\mathcal C_n\) be the collection of all distinct full star-simplices and full top-simplices in \(K_n\). By the star/top containment theorem,
\[
K_n=\bigcup_{U\in\mathcal C_n}U.
\]
Thus \(\mathcal C_n\) is a canonical cover of \(K_n\) by simplices. Since \(\Par(n)\) is finite, the complex \(K_n\), the cover \(\mathcal C_n\), and all nerves and posets constructed below are finite. Since every \(U\in\mathcal C_n\) is a simplex, it is contractible. Moreover, if \(U_0,\ldots,U_r\in\mathcal C_n\), then \(U_0\cap\cdots\cap U_r\) is a set of vertices contained in each simplex \(U_i\). When nonempty, such a set is a face of each \(U_i\), hence a simplex of \(K_n\), and therefore contractible. Thus every nonempty finite intersection of members of \(\mathcal C_n\) is contractible.

Let
\[
N_n=N(\mathcal C_n)
\]
be the nerve of this cover. Its vertices are the containers \(U\in\mathcal C_n\), and a finite set \(\{U_0,\ldots,U_r\}\) spans a simplex of \(N_n\) precisely when
\[
U_0\cap\cdots\cap U_r\ne\varnothing.
\]
Equivalently, a simplex of \(N_n\) records a finite family of canonical containers having at least one common partition.

\begin{backgroundtheorem}[Good-cover reduction]\label{thm:good-cover}
For every \(n\),
\[
K_n\simeq N_n.
\]
\end{backgroundtheorem}

\begin{proof}[Proof sketch]
The family \(\mathcal C_n\) is a good cover in the simplicial sense: its members are contractible and every nonempty finite intersection is contractible. The nerve lemma therefore gives a homotopy equivalence \(K_n\simeq N(\mathcal C_n)=N_n\) \cite{BorsukNerve,BjornerTopologicalMethods,KozlovCombinatorialAlgebraicTopology}.
\end{proof}

\begin{remark}
The passage from \(K_n\) to \(N_n\) removes the internal simplicial structure of each full star/top container and remembers only which containers meet. This is already a major conceptual shift: local simplex size is replaced by container-overlap data. However, \(N_n\) itself may still be complicated. The next steps explain why its homotopy type is nevertheless controlled by a shallow intersection poset.
\end{remark}

\subsection{Anchor simplices}

For each partition \(\lambda\vdash n\), define
\[
A_\lambda=\{U\in\mathcal C_n:\lambda\in U\}.
\]
Thus \(A_\lambda\) is the set of all full star/top containers passing through \(\lambda\). Since all these containers have \(\lambda\) in common, they span a simplex of the nerve \(N_n\). We use the same notation \(A_\lambda\) for this simplex of \(N_n\).

The indexed family
\[
\mathcal A_n=(A_\lambda)_{\lambda\vdash n}
\]
is the anchor cover of \(N_n\). It is indexed by the original vertices of \(K_n\), rather than by the containers. When speaking only about the union of the cover we may suppress the indexing, but the indexing is kept when forming its nerve.

\begin{proposition}[The anchors cover the nerve]\label{prop:anchors-cover}
The indexed family \(\mathcal A_n\) covers \(N_n\):
\[
N_n=\bigcup_{\lambda\vdash n}A_\lambda.
\]
\end{proposition}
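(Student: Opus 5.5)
The proof works directly from the definition of the nerve as a simplicial complex. The claim is that every simplex of \(N_n\) is a face of some anchor simplex \(A_\lambda\). Once this holds, the union \(\bigcup_{\lambda}A_\lambda\), where each \(A_\lambda\) is read as the full simplex it spans together with all its faces, contains every simplex of \(N_n\). The reverse inclusion is automatic. As already observed above, each \(A_\lambda\) is a simplex of \(N_n\), because all of its members share the vertex \(\lambda\). Since \(N_n\) is a simplicial complex, all faces of \(A_\lambda\) also lie in \(N_n\).

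For the main inclusion, I would take an arbitrary simplex \(\{U_0,\ldots,U_r\}\) of \(N_n\). By the definition of the nerve, \(U_0\cap\cdots\cap U_r\ne\varnothing\). Choose a partition \(\lambda\) in this intersection. Then \(\lambda\in U_i\) for every \(i\), so by definition \(U_i\in A_\lambda\) for every \(i\). Hence \(\{U_0,\ldots,U_r\}\subseteq A_\lambda\), and the simplex is a face of the anchor simplex \(A_\lambda\). This settles all simplices of positive dimension.

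The vertices of \(N_n\) need a separate check. Each container \(U\in\mathcal C_n\) is a nonempty simplex of \(K_n\): it contains its base partition \(\lambda\) by construction of the full star/top simplices. So \(\{U\}\) is contained in \(A_\lambda\) for any vertex \(\lambda\) of \(U\), which is the case \(r=0\) of the argument above.

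There is no genuine obstacle here. The only point needing care is bookkeeping. First, \(A_\lambda\) must be understood as a closed simplex of \(N_n\), including its faces, and not merely as a vertex set. Second, \(\mathcal A_n\) is an indexed family, so distinct partitions may have equal anchor sets; the covering statement concerns only the union, so this indexing plays no role. Finally, we need every container to be nonempty, which is guaranteed by its definition.
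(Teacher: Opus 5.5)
Your proof is correct and follows essentially the same route as the paper's: choose a partition \(\lambda\) in the common intersection \(U_0\cap\cdots\cap U_r\), so that every \(U_i\) lies in \(A_\lambda\). Your extra remarks are harmless bookkeeping that the paper leaves implicit: the reverse inclusion, the vertex case (nonemptiness of each container), and reading \(A_\lambda\) as a closed simplex.
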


\begin{proof}
Let \(\sigma=\{U_0,\ldots,U_r\}\) be a simplex of \(N_n\). By definition of the nerve,
\[
U_0\cap\cdots\cap U_r\ne\varnothing.
\]
Choose \(\lambda\) in this common intersection. Then every \(U_i\) contains \(\lambda\), hence \(U_i\in A_\lambda\) for all \(i\). Therefore \(\sigma\subseteq A_\lambda\).
\end{proof}

For a nonempty finite set \(S\subseteq\Par(n)\), define
\[
A_S=\bigcap_{\lambda\in S}A_\lambda
=
\{U\in\mathcal C_n:S\subseteq U\}.
\]
Thus \(A_S\) is the simplex of \(N_n\), if nonempty, whose vertices are precisely the full star/top containers containing all partitions in \(S\).

The following elementary criterion is useful throughout the reduction.

\begin{backgroundproposition}[Anchor-intersection criterion]\label{prop:anchor-criterion}
Let \(S\subseteq\Par(n)\) be finite and nonempty. Then
\[
A_S\ne\varnothing
\]
if and only if \(S\) is a simplex of \(K_n\). Whenever \(A_S\ne\varnothing\), the intersection \(A_S\) is itself a simplex of \(N_n\).
\end{backgroundproposition}

\begin{proof}
If \(A_S\ne\varnothing\), then some container \(U\in\mathcal C_n\) contains every vertex of \(S\). Since \(U\) is a simplex of \(K_n\), the set \(S\) is a simplex of \(K_n\).

Conversely, if \(S\) is a simplex of \(K_n\), the star/top containment theorem gives a container \(U\in\mathcal C_n\) with \(S\subseteq U\). Hence \(U\in A_S\), so \(A_S\ne\varnothing\).

Finally, assume \(A_S\ne\varnothing\). Its vertices are containers containing \(S\). Any finite subfamily of these containers has common intersection containing \(S\), hence spans a simplex of \(N_n\). Therefore all vertices of \(A_S\) span a simplex of \(N_n\).
\end{proof}

\begin{corollary}[The anchor cover is good]\label{cor:anchor-good}
The indexed family \(\mathcal A_n\) is a good cover of \(N_n\). Moreover, the nerve of this indexed cover is canonically isomorphic to \(K_n\).
\end{corollary}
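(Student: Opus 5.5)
The plan is to verify the two claims of Corollary~\ref{cor:anchor-good} directly from Background Proposition~\ref{prop:anchor-criterion} and Proposition~\ref{prop:anchors-cover}.

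\emph{The good-cover property.} By Proposition~\ref{prop:anchors-cover} the indexed family \(\mathcal A_n\) covers \(N_n\). Each member \(A_\lambda\) is nonempty, since every vertex \(\lambda\) is a \(0\)-simplex of \(K_n\) and Background Proposition~\ref{prop:anchor-criterion} with \(S=\{\lambda\}\) applies. Each \(A_\lambda\) is a simplex of \(N_n\), hence contractible. A finite intersection \(A_{\lambda_0}\cap\cdots\cap A_{\lambda_r}\) equals \(A_S\) with \(S=\{\lambda_0,\ldots,\lambda_r\}\). By Background Proposition~\ref{prop:anchor-criterion}, whenever this intersection is nonempty it is a simplex of \(N_n\). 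It is a full simplex, not merely a vertex set: all finite subfamilies of containers in \(A_S\) meet in \(S\), so the whole subcomplex spanned is the simplex on that vertex set. It is therefore contractible. Finiteness of \(\Par(n)\) makes everything finite, so \(\mathcal A_n\) is a good cover in the simplicial sense.

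\emph{The nerve of the indexed cover.} The nerve \(N(\mathcal A_n)\) has vertex set the index set \(\Par(n)\), since the family is indexed and repetitions \(A_\lambda=A_\mu\) with \(\lambda\ne\mu\) are kept as distinct vertices. A finite set \(S\subseteq\Par(n)\) spans a simplex of \(N(\mathcal A_n)\) exactly when \(A_S\ne\varnothing\). By Background Proposition~\ref{prop:anchor-criterion} this holds exactly when \(S\) is a simplex of \(K_n\). Hence the identity map on \(\Par(n)\) is a simplicial isomorphism \(N(\mathcal A_n)\cong K_n\). It is canonical because no choices are made. As a consistency check, the nerve lemma then recovers \(K_n\simeq N_n\) of Background Theorem~\ref{thm:good-cover}, since \(N_n\simeq N(\mathcal A_n)\cong K_n\).

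\emph{The main subtle point.} The real content is already in the criterion, whose converse direction uses the star/top containment of Background Theorem~\ref{thm:star-top-containment}. What remains delicate is bookkeeping. First, one must insist on the indexed nerve, because an unindexed version could collapse vertices \(\lambda\ne\mu\) with \(A_\lambda=A_\mu\) and fail to be isomorphic to \(K_n\). Second, one must check that a nonempty intersection of simplices of the flag-free complex \(N_n\) is again a simplex. I would spell out the second point explicitly using the last paragraph of the criterion's proof.
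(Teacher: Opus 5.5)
Your proposal is correct and follows the paper's proof. You derive goodness from the anchor-intersection criterion (each nonempty \(A_S\) is a simplex of \(N_n\)), and you get the isomorphism \(N(\mathcal A_n)\cong K_n\) from the identity on \(\Par(n)\), since \(S\) spans a nerve simplex iff \(A_S\ne\varnothing\) iff \(S\) is a simplex of \(K_n\). Your extra checks (nonemptiness of each \(A_\lambda\), keeping the indexing) are sound, and the ``delicate'' intersection point is automatic: \(A_S\) is a vertex subset of the simplex \(A_{\lambda_0}\), so it spans a face of it.
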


\begin{proof}
Each \(A_\lambda\) is a simplex of \(N_n\), and every nonempty finite intersection \(A_S\) is a simplex by Proposition~\ref{prop:anchor-criterion}. Hence the cover is good. Its nerve has one vertex for each partition \(\lambda\vdash n\), and a finite set \(S\) spans a simplex precisely when \(A_S\ne\varnothing\). By Proposition~\ref{prop:anchor-criterion}, this is precisely the condition that \(S\) be a simplex of \(K_n\).
\end{proof}

\begin{remark}[Why a second nerve is not enough]
The preceding corollary is deliberately included because it prevents a common misunderstanding. If one simply takes the nerve of the anchor cover, one recovers \(K_n\). The low-dimensional model does not come from replacing \(N_n\) by this second nerve. It comes from retaining the actual intersection data of the anchor cover and organizing those intersections into a poset. The reduction is therefore an intersection-poset reduction, not merely a second application of the nerve construction.
\end{remark}

\subsection{Rigidity of anchor intersections}

The decisive input is that nonempty intersections of anchors are much more rigid than the anchors themselves. A single anchor \(A_\lambda\) may be large, because many full star/top containers may pass through \(\lambda\). But if two or more partitions are prescribed, the number of containers containing all of them becomes sharply constrained.

The geometric reason is the star/top classification of cliques. Relative to a vertex \(\lambda\), a full star container fixes one removable corner and allows several addable targets; a full top container fixes one addable corner and allows several removable sources. Therefore, if a container contains \(\lambda\) and two different transfers with the same removable corner, it is forced to be the corresponding full star container. Dually, if it contains \(\lambda\) and two different transfers with the same addable corner, it is forced to be the corresponding full top container. This forcing is the local source of overlap rigidity.

For the present synthesis we use the following consequence, proved as part of the star/top classification and anchor analysis in the topological paper \cite[Theorem~5.7]{LyudogovskiyHomotopyPartitionGraph}. The proof idea below recalls the mechanism, but the proposition is used here as an imported background result.

\begin{backgroundproposition}[Anchor-intersection rigidity]\label{prop:anchor-rigidity}
Let \(S\subseteq\Par(n)\) be nonempty and suppose \(A_S\ne\varnothing\). Then:
\begin{enumerate}[label=\textup{(\arabic*)}]
\item if \(|S|=1\), the simplex \(A_S=A_\lambda\) may have arbitrary dimension;
\item if \(|S|=2\), then \(A_S\) has at most three vertices;
\item if \(|S|\ge 3\), then \(A_S\) is a single vertex of \(N_n\).
\end{enumerate}
\end{backgroundproposition}

\begin{figure}[ht]
\centering
\[
A_S=\bigcap_{\lambda\in S}A_\lambda
\qquad
\begin{array}{c|c}
|S| & \text{possible size of }A_S\\
\hline
1 & \text{may be large}\\
2 & \text{at most three vertices}\\
\ge 3 & \text{one vertex}
\end{array}
\]
\caption{Anchor-intersection rigidity. Single anchors may be large, but intersections of two anchors are small and intersections of three or more anchors collapse to a point. This rigidity is the input for the no-long-chain theorem for \(J_n\).}
\label{fig:anchor-rigidity}
\end{figure}

\begin{proof}[Proof idea, recalled from \cite{LyudogovskiyHomotopyPartitionGraph}]
The first case is only the definition of an anchor. For the second case, suppose \(S=\{\lambda,\mu\}\) with \(\mu\sim\lambda\), and write \(\mu=\lambda(c_\mu\to a_\mu)\). The classification of containers through \(\lambda\) says that any vertex \(U\in A_\lambda\) is one of three kinds: a full star container \(\Sigma^{\mathrm{star}}_{\max}(\lambda,c)\), a full top container \(\Sigma^{\mathrm{top}}_{\max}(\lambda,a)\), or a low-dimensional edge container \(\{\lambda,\nu\}\) when that edge itself occurs as a full star/top container. If such a \(U\) also contains \(\mu\), then necessarily
\[
c=c_\mu,
\qquad
 a=a_\mu,
\qquad
\text{or}\qquad
 \nu=\mu,
\]
respectively. Hence \(A_\lambda\cap A_\mu\) has at most the three vertices
\[
\Sigma^{\mathrm{star}}_{\max}(\lambda,c_\mu),
\qquad
\Sigma^{\mathrm{top}}_{\max}(\lambda,a_\mu),
\qquad
\{\lambda,\mu\},
\]
where some may be absent or may coincide as sets.

For \(|S|\ge 3\), the star/top clique classification implies that \(S\) is star-type or top-type relative to a suitable vertex of \(S\). In the star case it contains two distinct transfers with a common removable corner. By the forcing lemmas for full star/top containers, any full star/top container containing all of \(S\) must then be the unique full star container determined by that common removable corner. In the top case the dual argument forces the unique full top container determined by the common addable corner. Hence \(A_S\) has exactly one vertex.
\end{proof}

This proposition is the visible local rigidity statement that neutralizes high-dimensional complexity at the overlap level. Large anchors are possible, but higher intersections are shallow. The next subsection converts this fact into a low-dimensional homotopy model.

\subsection{The intersection poset and the closure operator}

Let
\[
P_n=\{\text{nonempty simplices of }K_n\},
\]
ordered by inclusion. For \(S\in P_n\), the intersection \(A_S\) is nonempty by Proposition~\ref{prop:anchor-criterion}. Define
\[
\operatorname{cl}_{\mathcal A}(S)
=
\{\mu\in\Par(n): A_S\subseteq A_\mu\}.
\]
In words, \(\operatorname{cl}_{\mathcal A}(S)\) is the set of all partitions whose anchors contain the whole intersection \(A_S\). Equivalently, it is the largest set of original vertices that is forced by the anchor intersection \(A_S\).

\begin{proposition}[Anchor closure]\label{prop:anchor-closure}
The map
\[
\operatorname{cl}_{\mathcal A}:P_n\to P_n
\]
is a closure operator: it is extensive, monotone, and idempotent.
\end{proposition}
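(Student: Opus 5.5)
First, I will check that \(\operatorname{cl}_{\mathcal A}\) really maps \(P_n\) to \(P_n\), and then verify the three axioms directly from the definition
\[
\operatorname{cl}_{\mathcal A}(S)=\{\mu: A_S\subseteq A_\mu\}.
\]
The key preliminary identity is
\[
A_{\operatorname{cl}_{\mathcal A}(S)}=A_S .
\]
The inclusion \(\supseteq\) holds because every \(\mu\in\operatorname{cl}_{\mathcal A}(S)\) satisfies \(A_S\subseteq A_\mu\). Hence \(A_S\subseteq\bigcap_{\mu\in\operatorname{cl}(S)}A_\mu\).

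The inclusion \(\subseteq\) follows from extensivity, which I prove first. For \(\lambda\in S\) we have \(A_S\subseteq A_\lambda\) by definition of \(A_S\) as an intersection, so \(S\subseteq\operatorname{cl}_{\mathcal A}(S)\). Intersecting over a larger index set can only shrink the result, so \(A_{\operatorname{cl}(S)}\subseteq A_S\).

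Next, well-definedness. Since \(S\in P_n\), Background Proposition~\ref{prop:anchor-criterion} gives \(A_S\ne\varnothing\). By the identity above, \(A_{\operatorname{cl}(S)}=A_S\ne\varnothing\). Also \(\operatorname{cl}(S)\) is finite, since \(\Par(n)\) is finite, and nonempty by extensivity. So the same criterion shows that \(\operatorname{cl}_{\mathcal A}(S)\) is a simplex of \(K_n\), that is, an element of \(P_n\).

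Concretely, choose any container \(U\in A_S\). Every \(\mu\in\operatorname{cl}(S)\) has \(U\in A_\mu\), i.e.\ \(\mu\in U\). Thus \(\operatorname{cl}(S)\subseteq U\) is a face of the simplex \(U\). This gives a direct proof of well-definedness without appealing to the criterion.

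Monotonicity: suppose \(S\subseteq T\) in \(P_n\). Then \(A_T\subseteq A_S\). Any \(\mu\) with \(A_S\subseteq A_\mu\) therefore also satisfies \(A_T\subseteq A_\mu\), so \(\operatorname{cl}(S)\subseteq\operatorname{cl}(T)\).

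Idempotence follows from the identity: since \(A_{\operatorname{cl}(S)}=A_S\), the defining condition for \(\operatorname{cl}(\operatorname{cl}(S))\) is exactly the one for \(\operatorname{cl}(S)\), so the two sets coincide.

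The only point requiring care is well-definedness, namely that the closure stays inside \(P_n\). It is this step that uses the container structure, through Background Proposition~\ref{prop:anchor-criterion} or directly via a container \(U\in A_S\). Everything else is formal Galois-connection bookkeeping between sets of partitions and sets of containers.
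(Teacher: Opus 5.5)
Your proof is correct and follows essentially the same route as the paper. Extensivity and monotonicity come straight from the definition, the identity $A_{\operatorname{cl}_{\mathcal A}(S)}=A_S$ gives idempotence, and nonemptiness of $A_S$ together with the anchor-intersection criterion shows the closure lands in $P_n$. Your direct remark that $\operatorname{cl}_{\mathcal A}(S)\subseteq U$ for any $U\in A_S$ is a slightly cleaner way to get well-definedness than the paper's check over finite subsets $T\subseteq\operatorname{cl}_{\mathcal A}(S)$, but the substance is the same.
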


\begin{proof}
First, \(\operatorname{cl}_{\mathcal A}(S)\) is a simplex of \(K_n\). Indeed, if \(T\) is a finite subset of \(\operatorname{cl}_{\mathcal A}(S)\), then \(A_S\subseteq A_T\). Since \(A_S\ne\varnothing\), also \(A_T\ne\varnothing\), and Proposition~\ref{prop:anchor-criterion} implies that \(T\) is a simplex of \(K_n\).

The map is extensive because if \(\lambda\in S\), then \(A_S\subseteq A_\lambda\), so \(\lambda\in\operatorname{cl}_{\mathcal A}(S)\). It is monotone because \(S\subseteq T\) implies \(A_T\subseteq A_S\); hence every anchor containing \(A_S\) also contains \(A_T\). Finally, set \(C=\operatorname{cl}_{\mathcal A}(S)\). By extensivity, \(A_C\subseteq A_S\). Conversely, every \(\mu\in C\) satisfies \(A_S\subseteq A_\mu\), hence \(A_S\subseteq A_C\). Thus \(A_C=A_S\), and therefore
\[
\operatorname{cl}_{\mathcal A}(C)=\operatorname{cl}_{\mathcal A}(S)=C.
\]
So the map is idempotent.
\end{proof}

Now define the intersection poset
\[
J_n=\{A_S:S\in P_n\},
\]
where equal intersections are identified, ordered by inclusion.

The closure operator explains why \(J_n\) is not merely an auxiliary bookkeeping device. It is the fixed-part model of the anchor-intersection structure.

\begin{proposition}[Fixed part and intersection poset]\label{prop:fixed-intersection-poset}
The assignment
\[
S\longmapsto A_S
\]
induces an order-reversing bijection
\[
\operatorname{Fix}(\operatorname{cl}_{\mathcal A})^{\mathrm{op}}
\cong
J_n,
\]
where \(\operatorname{Fix}(\operatorname{cl}_{\mathcal A})\) is the set of fixed simplices of the closure operator.
\end{proposition}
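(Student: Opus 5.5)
The plan is to restrict the map \(S\mapsto A_S\) from \(P_n\) to the fixed simplices of \(\operatorname{cl}_{\mathcal A}\) and check three things in turn: it is surjective onto \(J_n\), it is injective, and it reverses order in both directions. The key identity comes from the proof of Proposition~\ref{prop:anchor-closure}: for every \(S\in P_n\),
\[
A_{\operatorname{cl}_{\mathcal A}(S)}=A_S .
\]
This identity will carry most of the argument.

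\emph{Surjectivity.} Let \(A_S\in J_n\) with \(S\in P_n\). Then \(\operatorname{cl}_{\mathcal A}(S)\) is a fixed simplex by idempotence. By the identity above, \(A_{\operatorname{cl}_{\mathcal A}(S)}=A_S\). So every element of \(J_n\) is the image of some fixed simplex.

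\emph{Injectivity.} Suppose \(C,D\) are fixed simplices with \(A_C=A_D\). By definition,
\[
C=\operatorname{cl}_{\mathcal A}(C)=\{\mu: A_C\subseteq A_\mu\}=\{\mu: A_D\subseteq A_\mu\}=\operatorname{cl}_{\mathcal A}(D)=D .
\]
This step also shows that the closure of \(S\) depends only on the intersection \(A_S\), so identifying equal intersections in \(J_n\) is consistent.

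\emph{Order reversal.} First, if \(C\subseteq D\), then \(A_D\subseteq A_C\) directly from the definition of \(A\) as an intersection over the elements of the set. For the converse, which is the only step with real content, suppose \(A_D\subseteq A_C\). Take any \(\mu\in C\). Then \(A_C\subseteq A_\mu\), hence \(A_D\subseteq A_\mu\), so \(\mu\in\operatorname{cl}_{\mathcal A}(D)=D\). Therefore \(C\subseteq D\).

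Combining the three steps, \(S\mapsto A_S\) is an order isomorphism from \(\operatorname{Fix}(\operatorname{cl}_{\mathcal A})^{\mathrm{op}}\) onto \(J_n\), with the order on \(J_n\) given by inclusion.

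I expect no serious obstacle. The one point needing care is that all the relevant \(A_S\) are nonempty, so that \(\operatorname{cl}_{\mathcal A}\) lands in \(P_n\). This holds by Proposition~\ref{prop:anchor-criterion}, since every \(S\in P_n\) is a simplex of \(K_n\).
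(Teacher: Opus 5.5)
Your proof is correct and takes essentially the same route as the paper: the paper packages your injectivity step as an explicit inverse $\Psi(X)=\{\mu\in\Par(n): X\subseteq A_\mu\}$ with $\Psi(A_S)=\operatorname{cl}_{\mathcal A}(S)$, and uses the same identity $A_{\operatorname{cl}_{\mathcal A}(S)}=A_S$ for the other composite. Your explicit check that $A_D\subseteq A_C$ implies $C\subseteq D$ spells out the reverse order implication, which the paper leaves implicit in the fact that $\Psi$ is also order-reversing.
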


\begin{proof}
For \(X\in J_n\), define
\[
\Psi(X)=\{\mu\in\Par(n):X\subseteq A_\mu\}.
\]
If \(X=A_S\), then \(\Psi(X)=\operatorname{cl}_{\mathcal A}(S)\). Thus fixed simplices are exactly recovered from their intersections. Conversely, if \(T=\operatorname{cl}_{\mathcal A}(S)\), the proof of Proposition~\ref{prop:anchor-closure} gives \(A_T=A_S\). Hence the two assignments are inverse after restricting to fixed simplices. Since inclusion of simplices reverses inclusion of the corresponding anchor intersections, the bijection is order-reversing.
\end{proof}

A standard theorem on closure operators on finite posets says that the order complex of a finite poset deformation retracts onto the order complex of the fixed subposet of a closure operator \cite{BjornerTopologicalMethods,KozlovCombinatorialAlgebraicTopology,BarmakFiniteSpaces}. Applying this to \(P_n\) gives the following model.

\begin{backgroundtheorem}[Anchor-poset reduction]\label{thm:anchor-poset}
For every \(n\),
\[
K_n\simeq \Delta(J_n).
\]
Consequently, since \(K_n\simeq N_n\), also
\[
N_n\simeq \Delta(J_n).
\]
\end{backgroundtheorem}

\begin{proof}[Proof sketch]
The order complex \(\Delta(P_n)\) is the barycentric subdivision of \(K_n\), so \(\Delta(P_n)\simeq K_n\). By the closure-operator theorem,
\[
\Delta(P_n)\simeq \Delta(\operatorname{Fix}(\operatorname{cl}_{\mathcal A})).
\]
By Proposition~\ref{prop:fixed-intersection-poset}, the fixed subposet is anti-isomorphic to \(J_n\), and a poset and its opposite have canonically isomorphic order complexes. Hence
\[
K_n\simeq \Delta(J_n).
\]
The equivalence with \(N_n\) follows from Background Theorem~\ref{thm:good-cover}.
\end{proof}

\begin{remark}
This is the formal point at which high-dimensional local simplices stop being the relevant global object. The complex \(K_n\) is replaced, up to homotopy, by the order complex of a poset whose elements are intersections of anchors. The dimension of this model is determined by the length of strict inclusion chains among such intersections.
\end{remark}

\subsection{Chain length in the intersection poset}

We now explain why \(\Delta(J_n)\) has dimension at most \(2\). A simplex of \(\Delta(J_n)\) is a strict chain
\[
X_0\subsetneq X_1\subsetneq\cdots\subsetneq X_r
\]
in \(J_n\). Therefore bounding the dimension of \(\Delta(J_n)\) is the same as bounding the length of strict chains of anchor intersections.

The following elementary consequences of Proposition~\ref{prop:anchor-rigidity} make the chain bound transparent.

\begin{lemma}\label{lem:non-singleton-intersections}
Every non-singleton element of \(J_n\) is either an anchor \(A_\lambda\) or a pairwise overlap
\[
A_{\lambda,\mu}=A_\lambda\cap A_\mu
\]
for some edge \(\lambda\mu\in E(G_n)\).
\end{lemma}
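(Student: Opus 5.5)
The plan is a case split on the size of the indexing simplex \(S\), with Background Proposition~\ref{prop:anchor-rigidity} doing all the work. Take a non-singleton element \(X\in J_n\), meaning \(X\) has at least two vertices. By definition \(X=A_S\) for some \(S\in P_n\), i.e.\ for some nonempty simplex \(S\) of \(K_n\). Since equal intersections are identified in \(J_n\), the statement only asks that \emph{some} representative \(S\) of \(X\) have the required form.

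The cases are as follows.
\begin{itemize}
\item If \(|S|=1\), say \(S=\{\lambda\}\), then \(X=A_\lambda\) is an anchor, and there is nothing to prove.
\item If \(|S|\ge 3\), then part~(3) of Background Proposition~\ref{prop:anchor-rigidity} says \(A_S\) is a single vertex of \(N_n\). This contradicts the assumption that \(X\) is non-singleton, so the case cannot occur.
\item If \(|S|=2\), write \(S=\{\lambda,\mu\}\). Then \(X=A_\lambda\cap A_\mu=A_{\lambda,\mu}\).
\end{itemize}

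In the last case it remains to check that \(\lambda\mu\) is an edge of \(G_n\). Because \(S\in P_n\) is a simplex of \(K_n=\Cl(G_n)\), its two distinct vertices span a \(1\)-simplex. In a clique complex that means a clique of size two, i.e.\ \(\lambda\mu\in E(G_n)\). If one prefers to argue from the definition of \(J_n\) rather than \(P_n\), the same conclusion follows from Proposition~\ref{prop:anchor-criterion}: \(A_S\ne\varnothing\) forces \(S\) to be a simplex of \(K_n\).

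The only delicate point is the bookkeeping caused by the identification of equal intersections. An element \(X\) may be representable both as \(A_S\) with \(|S|\ge3\) and as \(A_{S'}\) with \(|S'|\le 2\). The case analysis above is applied to an arbitrary representative, and the \(|S|\ge3\) case is excluded by the non-singleton hypothesis regardless of which representative is chosen. So no representative issue actually arises.

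I expect no real obstacle: all the substance lies in the imported rigidity statement, which I will cite verbatim. The proof should remain a short three-case argument.
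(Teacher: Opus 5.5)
Your proposal is correct and follows the paper's own proof essentially verbatim. Both arguments split on \(|S|\): the case \(|S|\ge 3\) is excluded by anchor-intersection rigidity, and in the case \(|S|=2\) the edge condition comes from \(\{\lambda,\mu\}\) being a simplex of \(K_n\), which is equivalent to \(A_{\lambda,\mu}\ne\varnothing\) by the anchor criterion.
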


\begin{proof}
Write \(X=A_S\). If \(|S|\ge 3\), then Proposition~\ref{prop:anchor-rigidity} says that \(A_S\) is a singleton. Hence a non-singleton \(X\) must come from \(|S|=1\) or \(|S|=2\). These give respectively \(A_\lambda\) and \(A_{\lambda,\mu}\). In the second case \(A_{\lambda,\mu}\ne\varnothing\), so \(\{\lambda,\mu\}\) is a simplex of \(K_n\), equivalently \(\lambda\sim\mu\) in \(G_n\).
\end{proof}

\begin{lemma}\label{lem:below-anchor}
Let \(X\in J_n\) be non-singleton and suppose \(X\subseteq A_\lambda\). Then either
\[
X=A_\lambda,
\]
or
\[
X=A_{\lambda,\mu}
\]
for some neighbor \(\mu\sim\lambda\).
\end{lemma}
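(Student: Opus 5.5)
We begin with Lemma~\ref{lem:non-singleton-intersections}. Since \(X\) is a non-singleton element of \(J_n\), that lemma says \(X=A_\nu\) for some partition \(\nu\), or \(X=A_{\nu,\mu}\) for some edge \(\nu\mu\in E(G_n)\). We then use the extra hypothesis \(X\subseteq A_\lambda\) to tie this representation to \(\lambda\).

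The main tool is the closure operator of Proposition~\ref{prop:anchor-closure}. Write \(X=A_S\) with \(|S|\le 2\). The inclusion \(X\subseteq A_\lambda\) says exactly that \(\lambda\in\operatorname{cl}_{\mathcal A}(S)\). Set \(T=S\cup\{\lambda\}\). Then \(A_T=A_S\cap A_\lambda=X\), because \(X\subseteq A_\lambda\). So \(X=A_T\) with \(\lambda\in T\).

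We split according to \(|T|\).
\begin{enumerate}[label=\textup{(\roman*)}]
\item If \(|T|\ge 3\), then Background Proposition~\ref{prop:anchor-rigidity}(3) makes \(A_T=X\) a single vertex. This contradicts the assumption that \(X\) is non-singleton, so this case cannot occur.
\item If \(|T|=1\), then \(T=\{\lambda\}\) and \(X=A_\lambda\).
\item If \(|T|=2\), then \(T=\{\lambda,\mu\}\) with \(\mu\ne\lambda\), and \(X=A_\lambda\cap A_\mu=A_{\lambda,\mu}\). This set is nonempty, so Proposition~\ref{prop:anchor-criterion} shows that \(\{\lambda,\mu\}\) is a simplex of \(K_n\), that is, \(\mu\sim\lambda\) in \(G_n\).
\end{enumerate}
These cases give exactly the two alternatives in the statement.

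The one step that needs care is the identity \(A_{S\cup\{\lambda\}}=A_S\) when \(X=A_S\subseteq A_\lambda\). It follows directly from the definition \(A_S=\bigcap_{\nu\in S}A_\nu\). We must also make sure that rigidity is applied to the enlarged set \(T\), not to \(S\). Since the elements of \(J_n\) are identified as sets of containers, it does not matter that the representing set \(S\) is not unique.
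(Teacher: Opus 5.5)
Your proof is correct and is essentially the paper's argument: enlarge $S$ to $T=S\cup\{\lambda\}$, observe $A_T=A_S\cap A_\lambda=X$, use anchor-intersection rigidity to force $|T|\le 2$, and read off the two cases, with the anchor criterion giving $\mu\sim\lambda$. The preliminary appeal to Lemma~\ref{lem:non-singleton-intersections} and the closure-operator phrasing are harmless but unnecessary, since the argument works for any representing set $S$.
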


\begin{proof}
Write \(X=A_S\). Since \(X\subseteq A_\lambda\), we have
\[
X=A_S=A_S\cap A_\lambda=A_{S\cup\{\lambda\}}.
\]
Because \(X\) is non-singleton, Proposition~\ref{prop:anchor-rigidity} implies \(|S\cup\{\lambda\}|\le 2\). If \(S\cup\{\lambda\}=\{\lambda\}\), then \(X=A_\lambda\). If \(S\cup\{\lambda\}=\{\lambda,\mu\}\), then
\[
X=A_{\lambda,\mu}.
\]
This includes the apparently separate case \(X=A_\mu\) with \(A_\mu\subseteq A_\lambda\): in that situation
\[
A_\mu=A_\mu\cap A_\lambda=A_{\lambda,\mu},
\]
so it is already a pairwise-overlap case. Since \(A_{\lambda,\mu}\ne\varnothing\), the anchor criterion implies that \(\lambda\mu\) is an edge of \(G_n\) when \(\mu\ne\lambda\).
\end{proof}

\begin{lemma}\label{lem:below-pair}
Let \(X\in J_n\) be non-singleton and suppose
\[
X\subseteq A_{\lambda,\mu}=A_\lambda\cap A_\mu.
\]
Then
\[
X=A_{\lambda,\mu}.
\]
\end{lemma}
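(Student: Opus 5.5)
The argument follows the proof of Lemma~\ref{lem:below-anchor} verbatim, with the pair $\{\lambda,\mu\}$ in place of the single vertex $\lambda$. First, since $X\subseteq A_{\lambda,\mu}$ and $A_{\lambda,\mu}$ is assumed to be an anchor intersection, $\{\lambda,\mu\}$ is a simplex of $K_n$ (so either $\mu=\lambda$ or $\lambda\sim\mu$). If $\mu=\lambda$, then $A_{\lambda,\mu}=A_\lambda$, and Lemma~\ref{lem:below-anchor} is not by itself enough. So I treat the intended case $\mu\neq\lambda$, an honest edge. The case $\mu=\lambda$ is implicitly excluded by the notation $A_{\lambda,\mu}$ for an edge, as in Lemma~\ref{lem:non-singleton-intersections}; I would say so explicitly.

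Write $X=A_S$ for some $S\in P_n$. Because $X\subseteq A_\lambda$ and $X\subseteq A_\mu$, intersecting does not change $X$:
\[
X=A_S\cap A_\lambda\cap A_\mu=A_{S\cup\{\lambda,\mu\}}.
\]
This is the key identity. It rewrites $X$ as an anchor intersection indexed by a set that already contains the two distinct vertices $\lambda,\mu$.

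Since $X$ is non-singleton and nonempty, part~(3) of Background Proposition~\ref{prop:anchor-rigidity} forbids $|S\cup\{\lambda,\mu\}|\ge 3$. Hence $S\cup\{\lambda,\mu\}=\{\lambda,\mu\}$. Substituting into the key identity gives $X=A_{\{\lambda,\mu\}}=A_{\lambda,\mu}$, as required.

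There is no real obstacle. The only delicate point is bookkeeping: rigidity must be applied to the enlarged index set $S\cup\{\lambda,\mu\}$, not to $S$ itself, because $S$ could a priori be a single vertex whose anchor happens to lie inside $A_{\lambda,\mu}$. The intersection identity handles this uniformly, exactly as the remark in the proof of Lemma~\ref{lem:below-anchor} handles the case $X=A_\mu$. As a consequence, any non-singleton element strictly below a pairwise overlap is impossible, which feeds directly into the chain bound for $\Delta(J_n)$.
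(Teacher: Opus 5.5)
Your proof is correct and follows the paper's argument: rewrite \(X=A_S\) as \(A_{S\cup\{\lambda,\mu\}}\), then use rigidity part (3) to force \(S\cup\{\lambda,\mu\}=\{\lambda,\mu\}\). Your explicit note that \(\lambda\neq\mu\) is required is a reasonable clarification; the paper uses this silently when it says the index set already contains two distinct partitions, and the rigidity step needs two distinct vertices.
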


\begin{proof}
Write \(X=A_S\). Since \(X\subseteq A_\lambda\cap A_\mu\),
\[
X=A_S=A_S\cap A_\lambda\cap A_\mu=A_{S\cup\{\lambda,\mu\}}.
\]
Again \(X\) is non-singleton, so Proposition~\ref{prop:anchor-rigidity} implies \(|S\cup\{\lambda,\mu\}|\le 2\). Since this set already contains the two distinct partitions \(\lambda\) and \(\mu\), it must equal \(\{\lambda,\mu\}\), and therefore
\[
X=A_{\lambda,\mu}.
\]
Thus possible nested anchors do not create a further case: if, for example, \(X=A_\nu\subseteq A_{\lambda,\mu}\), then the displayed equality gives \(A_\nu=A_{\lambda,\mu,\nu}\); non-singleton rigidity forces \(\nu\in\{\lambda,\mu\}\), and hence \(X=A_{\lambda,\mu}\).
\end{proof}

\begin{backgroundtheorem}[Low-dimensional overlap model]\label{thm:low-dimensional-model}
For every \(n\), every strict chain in \(J_n\) has at most three elements. Equivalently,
\[
\dim\Delta(J_n)\le 2.
\]
Consequently, \(K_n\) has a homotopy model of dimension at most \(2\):
\[
K_n\simeq\Delta(J_n),
\qquad
\dim\Delta(J_n)\le 2.
\]
\end{backgroundtheorem}

\begin{proof}
Let
\[
X_0\subsetneq X_1\subsetneq\cdots\subsetneq X_r
\]
be a strict chain in \(J_n\). If \(X_r\) is a singleton, then \(r=0\). Suppose \(X_r\) is not a singleton. By Lemma~\ref{lem:non-singleton-intersections}, \(X_r\) is either an anchor \(A_\lambda\) or a pairwise overlap \(A_{\lambda,\mu}\).

If \(X_r=A_{\lambda,\mu}\), then Lemma~\ref{lem:below-pair} says that no proper non-singleton element lies below it. Therefore any proper element below it is a singleton, and the chain has length at most \(1\).

If \(X_r=A_\lambda\), then Lemma~\ref{lem:below-anchor} says that any proper non-singleton element below it is a pairwise overlap \(A_{\lambda,\mu}\). In particular, a nested anchor \(A_\mu\subsetneq A_\lambda\), if it occurs, is not a new level: it equals \(A_{\lambda,\mu}\). By Lemma~\ref{lem:below-pair}, nothing non-singleton lies properly below such a pairwise overlap. Hence the longest possible chain has the form
\[
\{U\}\subsetneq A_{\lambda,\mu}\subsetneq A_\lambda,
\]
which has length \(2\). Thus every strict chain in \(J_n\) has length at most \(2\), and \(\dim\Delta(J_n)\le 2\).

The homotopy equivalence \(K_n\simeq\Delta(J_n)\) is Background Theorem~\ref{thm:anchor-poset}.
\end{proof}

\begin{remark}[Nested anchors]
The proof allows the possibility that distinct anchors may be nested. Such a nesting does not create an additional level in \(J_n\). Indeed, if \(A_\mu\subseteq A_\lambda\), then \(A_\mu=A_{\lambda,\mu}\), so the smaller anchor is already counted as a pairwise overlap. Similarly, if \(A_\nu\subseteq A_{\lambda,\mu}\) and the intersection is non-singleton, then \(A_\nu=A_{\lambda,\mu}\). This is the point that prevents chains of anchors from extending beyond the three-level pattern
\[
\{U\}\subsetneq A_{\lambda,\mu}\subsetneq A_\lambda.
\]
\end{remark}

\begin{remark}[What is bounded]
The theorem does not say that anchors are small. A single anchor \(A_\lambda\) may be a simplex of high dimension, and full star/top containers may also have high dimension. What is bounded is the depth of strict chains of anchor intersections. This is why the global model has dimension at most \(2\) even though the original clique complex can contain high-dimensional simplices.
\end{remark}

\subsection{From overlap depth to global topology}

Background Theorem~\ref{thm:low-dimensional-model} is the technical heart of the synthesis. It explains why the global homotopy dimension remains low even when the original clique complex contains high-dimensional simplices. The important bounded quantity is not the dimension of a full star/top container, nor the dimension of an anchor simplex. Those may grow. The bounded quantity is the depth of strict overlap chains after passing to the anchor-intersection poset.

The global topology theorem then adds two further established inputs: \(K_n\) is simply connected, and its reduced homology is concentrated in degree \(2\). Combined with the \(2\)-dimensional model, these facts yield
\[
K_n\simeq \bigvee^{b_n}S^2,
\qquad
b_n=\chi(K_n)-1.
\]
The overlap-control mechanism can be summarized by the following dependency chain:
\[
\begin{aligned}
\mathcal C_n\text{ is a good cover}
&\Longrightarrow K_n\simeq N_n,\\
\text{anchor intersections}
&\Longrightarrow J_n,\\
\text{closure-operator reduction}
&\Longrightarrow K_n\simeq \Delta(J_n),\\
\text{chain length in }J_n\le 2
&\Longrightarrow \dim\Delta(J_n)\le 2.
\end{aligned}
\]
In this precise sense, the global model measures overlap depth rather than local simplex size. This is the structural reason why local high-dimensionality does not create high-dimensional global homology.

\section{Why Local Complexity and Global Simplicity Are Compatible}\label{sec:compatible}

We now assemble the two sides of the argument. Sections~\ref{sec:basic} and~\ref{sec:local} described the local and morphological side. Sections~\ref{sec:containers} and~\ref{sec:overlap} described the global topological side. The core answer is:
\[
\begin{array}{c}
\boxed{\text{local complexity grows inside canonical containers,}}\\[4pt]
\boxed{\text{global topology is controlled by container overlaps.}}
\end{array}
\]

\begin{figure}[ht]
\centering
\small
\begin{tabular}{c|c}
\textbf{Unbounded local size} & \textbf{Bounded global overlap depth}\\
\hline
\(\dim K_n\) is unbounded & \(\dim\Delta(J_n)\le 2\)\\
large local simplices & short chains in \(J_n\)\\
thick star/top containers & \(K_n\simeq\bigvee^{b_n}S^2\)\\
\end{tabular}
\caption{The separation between local simplex size and global overlap depth. The simplicial dimension of \(K_n\) is unbounded, but the order complex of the anchor-intersection poset is always at most two-dimensional.}
\label{fig:local-vs-global-depth}
\end{figure}

\subsection{Two different meanings of dimension}

The apparent tension comes partly from using the word ``dimension'' in different senses. The simplicial dimension of \(K_n\) is
\[
\dim K_n=\max\{\dim\sigma:\sigma\subseteq K_n\}.
\]
This is the dimension of the largest clique in \(G_n\). By contrast, the homotopical dimension relevant to the global theorem is the dimension of a homotopy model for \(K_n\). Background Theorem~\ref{thm:low-dimensional-model} gives a model \(K_n\simeq\Delta(J_n)\) with \(\dim\Delta(J_n)\le 2\).

\begin{corollary}[Simplicial dimension and homotopy dimension separate]\label{cor:dimension-separation}
The family \(K_n\) has unbounded simplicial dimension, but every \(K_n\) admits a homotopy model of dimension at most \(2\). In particular, high-dimensional simplices in \(K_n\) do not imply high-dimensional homotopy type.
\end{corollary}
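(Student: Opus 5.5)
The corollary combines two results already established above, so the plan is to cite them and add one short observation about homotopy invariance.

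For the first claim, unbounded simplicial dimension, I will use Proposition~\ref{prop:staircase}. For each \(t\ge 1\), the staircase partition \(\delta_t\vdash T_t\) has \(\dim_{\mathrm{loc}}(\delta_t)=t-1\). Hence \(K_{T_t}\) contains a \((t-1)\)-simplex, and
\[
\dim K_{T_t}\ge t-1.
\]
Since \(T_t\to\infty\) and \(t-1\) is unbounded, \(\sup_n\dim K_n=\infty\). This is also the first half of Theorem~\ref{thm:contrast}.

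For the second claim, I will use Background Theorem~\ref{thm:low-dimensional-model}. For every \(n\) it gives
\[
K_n\simeq\Delta(J_n),\qquad \dim\Delta(J_n)\le 2.
\]
Here \(\Delta(J_n)\) is a finite simplicial complex, because \(\Par(n)\) is finite and so \(J_n\) is finite. It is therefore a homotopy model of \(K_n\) of dimension at most \(2\), uniformly in \(n\). Alternatively, the last sentence of Background Theorem~\ref{thm:global-simplicity} states this directly.

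For the final sentence I need a precise reading of ``high-dimensional homotopy type''. I will take it to mean nonvanishing reduced homology in some degree \(\ge 3\), or more generally failure to be homotopy equivalent to a complex of dimension \(\le 2\). Homology is a homotopy invariant, and a complex of dimension at most \(2\) has \(\widetilde H_k=0\) for \(k\ge 3\). It follows that
\[
\widetilde H_k(K_n)\cong\widetilde H_k(\Delta(J_n))=0\qquad(k\ge 3)
\]
for every \(n\), including the values \(n=T_t\), where \(K_n\) carries simplices of dimension \(t-1\). The same holds for cohomology and for any other homotopy-invariant notion of dimension bounded by cellular dimension. So the staircase family gives explicit \(K_n\) of arbitrarily large simplicial dimension whose homotopy dimension is at most \(2\), and the implication ``large simplices \(\Rightarrow\) high-dimensional homotopy type'' fails.

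There is no real obstacle, since both ingredients are imported. The only point needing care is this last interpretive step: what is bounded is the dimension of a homotopy-equivalent model, not \(\dim K_n\) itself. So the conclusion has to be drawn through homotopy invariants such as \(\widetilde H_*\), and not by comparing the simplicial dimensions of \(K_n\) and \(\Delta(J_n)\).
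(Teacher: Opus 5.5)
Your proposal is correct and follows the paper's proof: unboundedness comes from Proposition~\ref{prop:staircase}, and the model of dimension at most $2$ comes from Background Theorem~\ref{thm:low-dimensional-model}. Your extra homology-vanishing argument for the final sentence is sound and is essentially the content of Corollary~\ref{cor:no-high-homology}.
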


\begin{proof}
The unboundedness of simplicial dimension follows from Proposition~\ref{prop:staircase}. The existence of a two-dimensional homotopy model follows from Background Theorem~\ref{thm:low-dimensional-model}.
\end{proof}

A large simplex in \(K_n\) records a large family of mutually adjacent vertices in \(G_n\). In the local transfer picture, such a family arises from a large compatible family of elementary transfers. Thus large simplices are local witnesses of thickness. They contribute to simplex layers and thick zones, but by themselves they do not carry reduced homology.

\begin{synthesisprinciple}[Local thickness is not global obstruction]\label{prin:thickness-not-obstruction}
Large local simplices should be interpreted as local thickness, not as automatic global homological obstruction. They may affect simplex counts, Euler characteristic, bouquet rank, and local or regional morphology, but they do not by themselves force nonzero homology in their own dimension.
\end{synthesisprinciple}

\subsection{The failure of the naive expectation}

A natural first expectation might be that as local simplex dimension grows, global topology should become more complicated. For the complexes \(K_n\), this expectation fails. More precisely, the implication
\[
\dim K_n\text{ large}
\Longrightarrow
\widetilde H_k(K_n)\ne 0\text{ for some large }k
\]
does not hold. Indeed, for every \(n\),
\[
\widetilde H_k(K_n)=0\qquad(k\ne 2).
\]

\begin{corollary}[No high-dimensional homology from local high-dimensionality]\label{cor:no-high-homology}
For every \(n\),
\[
\widetilde H_k(K_n)=0\qquad(k\ge 3).
\]
This remains true even for values of \(n\) for which \(K_n\) contains simplices of dimension \(k\) or larger.
\end{corollary}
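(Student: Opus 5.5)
The vanishing will come from the two-dimensional homotopy model of Background Theorem~\ref{thm:low-dimensional-model}, which needs nothing beyond $\dim\Delta(J_n)\le 2$. Alternatively one can simply read it off the bouquet statement of Background Theorem~\ref{thm:global-simplicity}. I would give the low-dimensional-model argument as the primary route, because it matches the synthesis: the vanishing is explained by overlap depth, not by the full bouquet classification.

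\emph{First step.} Invoke Background Theorem~\ref{thm:anchor-poset} to get a homotopy equivalence $K_n\simeq\Delta(J_n)$. Reduced simplicial homology is a homotopy invariant of geometric realizations, so
\[
\widetilde H_k(K_n)\cong\widetilde H_k(\Delta(J_n))
\]
for all $k$.

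\emph{Second step.} Invoke Background Theorem~\ref{thm:low-dimensional-model}, which gives $\dim\Delta(J_n)\le 2$. A finite simplicial complex of dimension at most $2$ has no $k$-chains for $k\ge 3$. Hence its simplicial chain groups, and therefore its reduced homology groups, vanish in degrees $k\ge 3$. Combining the two steps gives $\widetilde H_k(K_n)=0$ for $k\ge3$.

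\emph{Second assertion.} For the clause about simplices of dimension $k$ or larger, I would cite Proposition~\ref{prop:staircase}. It shows that for $n=T_t$ the complex $K_n$ contains a $(t-1)$-simplex, so for every $k$ there exist $n$ with $\dim K_n\ge k$. The vanishing just proved holds uniformly in $n$, so it applies to these $n$ as well. This is the contrast of Corollary~\ref{cor:dimension-separation}.

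There is no real obstacle here; all the substance lives in the imported rigidity result, Background Proposition~\ref{prop:anchor-rigidity}. The only point to state carefully is that the equivalence $K_n\simeq\Delta(J_n)$ is a genuine homotopy equivalence (via barycentric subdivision and the closure-operator deformation retraction), so homology transfers. As a consistency check, I would note that the bouquet form $\bigvee^{b_n}S^2$ of Background Theorem~\ref{thm:global-simplicity} gives the same conclusion directly.
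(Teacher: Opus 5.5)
Your proposal is correct and matches the paper's proof. The paper likewise obtains the vanishing either from the bouquet theorem or from the two-dimensional model \(K_n\simeq\Delta(J_n)\) with \(\dim\Delta(J_n)\le 2\) (Background Theorem~\ref{thm:low-dimensional-model}), and cites Proposition~\ref{prop:staircase} for the high-dimensional simplices; you have just spelled out the chain-group vanishing and the choice \(t=k+1\).
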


\begin{proof}
The vanishing follows from the global bouquet theorem, or already from the existence of a two-dimensional homotopy model. The existence of high-dimensional simplices for suitable \(n\) follows from Proposition~\ref{prop:staircase}.
\end{proof}

\subsection{The compatibility theorem}

\begin{synthesistheorem}[Local complexity and global simplicity are compatible through overlap control]\label{thm:compatibility}
For the family of partition graphs \(G_n\) and clique complexes \(K_n=\Cl(G_n)\), the following hold.
\begin{enumerate}[label=\textup{(\arabic*)}]
\item The local transfer type of a partition determines its local clique geometry, including degree, local clique number, and local simplex dimension.
\item The local simplex dimension of \(K_n\) is unbounded along the family.
\item Every simplex of \(K_n\) is contained in a contractible full star/top container.
\item The canonical cover by these containers is a good cover:
\[
K_n\simeq N_n.
\]
\item The anchor-cover/intersection-poset reduction gives
\[
N_n\simeq\Delta(J_n),
\qquad
\dim\Delta(J_n)\le 2.
\]
\item The established global topology theorem gives
\[
K_n\simeq \bigvee^{b_n}S^2,
\qquad
b_n=\chi(K_n)-1.
\]
\end{enumerate}
Therefore the growth of local and morphological complexity in \(G_n\) is compatible with the stable global topological form of \(K_n\): local complexity grows inside contractible containers, while global topology is controlled by a low-dimensional overlap poset.
\end{synthesistheorem}

\begin{proof}
Statement (1) is the local transfer-type theorem recalled in Background Proposition~\ref{prop:local-transfer}. Statement (2) follows from the staircase construction in Proposition~\ref{prop:staircase}. Statement (3) follows from the star/top containment theorem. Statement (4) is the good-cover reduction. Statement (5) is the anchor-cover/intersection-poset reduction and Background Theorem~\ref{thm:low-dimensional-model}. Statement (6) is the bouquet theorem. The final conclusion is the synthesis of these results.
\end{proof}

Synthesis Theorem~\ref{thm:compatibility} packages established components into one local/global explanation; it is not an independent new classification theorem. 

\subsection{What has and has not been explained}

The synthesis explains why high-dimensional simplices occur, why they do not create high-dimensional homology, why the global topology is low-dimensional in character, and why the final homotopy type is a bouquet of two-spheres. It also clarifies what remains outside the present explanation. The mechanism does not yet give a closed conceptual formula for \(b_n\). It does not decompose \(\chi(K_n)\) into contributions from named morphological regions such as the axis, spine, rear region, central zone, support strata, or shell layers. Thus the qualitative local/global problem is explained by overlap control, while a numerical and morphological refinement problem remains.

\section{Numerical and Topological Consequences}\label{sec:numerical}

Once the overlap-control mechanism is in place, the qualitative topological form of \(K_n\) is fixed. The numerical consequences are developed in detail in the numerical topology paper \cite{LyudogovskiyNumericalTopology}; here they are used to locate the residual global problem:
\[
K_n\simeq \bigvee^{b_n}S^2.
\]
Thus the remaining global topological variation is concentrated in one integer, \(b_n\). Since \(b_n=\chi(K_n)-1\), the global topological problem becomes numerical.

\subsection{From homotopy type to one integer}

The Euler characteristic of a bouquet of \(b_n\) two-spheres is \(1+b_n\). Therefore
\[
b_n=\chi(K_n)-1.
\]

\begin{corollary}[Numerical collapse of the qualitative topological problem]\label{cor:numerical-collapse}
For every \(n\), the qualitative homotopy type of \(K_n\) is completely determined by the Euler characteristic:
\[
K_n\simeq \bigvee^{\chi(K_n)-1}S^2.
\]
Consequently, within this family, the global topological classification problem reduces to the numerical problem of computing \(\chi(K_n)\), or equivalently \(b_n\).
\end{corollary}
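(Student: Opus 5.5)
The plan is to derive the corollary directly from Background Theorem~\ref{thm:global-simplicity}, together with the multiplicativity and homotopy invariance of the Euler characteristic.

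First, Background Theorem~\ref{thm:global-simplicity} gives, for each \(n\), a homotopy equivalence \(K_n\simeq\bigvee^{b_n}S^2\) for some integer \(b_n\ge 0\). Here \(b_n\) is intrinsically the rank of \(\widetilde H_2(K_n)\cong\mathbb Z^{b_n}\), with the empty wedge read as a point.

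Second, we compute \(\chi\) of the model. The bouquet \(\bigvee^{b}S^2\) has a CW structure with one \(0\)-cell and \(b\) two-cells, so \(\chi=1+b\). Alternatively, its reduced homology is \(\mathbb Z^{b}\) in degree \(2\) and zero elsewhere. Since \(K_n\) is a finite simplicial complex, \(\chi(K_n)\) can be read either as the alternating count of simplices or as the alternating sum of Betti numbers, and these agree. The Euler characteristic is a homotopy invariant, because it is computed from homology. Hence \(\chi(K_n)=1+b_n\), that is, \(b_n=\chi(K_n)-1\). Substituting this back into the equivalence yields \(K_n\simeq\bigvee^{\chi(K_n)-1}S^2\).

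For the ``consequently'' clause, the key fact is that bouquets \(\bigvee^{b}S^2\) with different \(b\) are not homotopy equivalent, since their \(\widetilde H_2\) have different ranks. So the homotopy type of \(K_n\) determines \(\chi(K_n)\), and by the displayed formula the converse also holds. Thus the map \(n\mapsto[K_n]\) factors injectively through \(n\mapsto\chi(K_n)\). This means classifying the homotopy types in the family is exactly the problem of computing \(\chi(K_n)\), equivalently \(b_n\). Note that \(\chi(K_n)\) is directly computable from clique counts of \(G_n\).

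I expect no real obstacle here; the only care needed is bookkeeping. Specifically, one should make sure \(\chi\) is taken of the finite complex \(K_n\), which is legitimate since \(\Par(n)\) is finite. One should also handle the case \(b_n=0\) consistently with the empty-wedge convention, where \(\chi=1\) for a point.
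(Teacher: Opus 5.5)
Your proposal is correct and takes the same route as the paper. The paper combines Background Theorem~\ref{thm:global-simplicity} with the observation that a bouquet of \(b_n\) two-spheres has Euler characteristic \(1+b_n\), so \(b_n=\chi(K_n)-1\); your further remark that bouquets of different ranks are distinguished by \(\widetilde H_2\) simply spells out the ``consequently'' clause, which the paper leaves implicit.
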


\subsection{Three numerical packages}

Let \(c_r(n)\) denote the number of \(r\)-cliques in \(G_n\), meaning complete subgraphs on \(r\) vertices. Such cliques are exactly the \((r-1)\)-simplices of \(K_n\). Then
\[
\chi(K_n)=\sum_{r\ge 1}(-1)^{r-1}c_r(n),
\]
and
\[
b_n=\sum_{r\ge 1}(-1)^{r-1}c_r(n)-1.
\]

Let \(d_r(n)\) denote the number of \(r\)-simplices of the nerve \(N_n\). Since \(K_n\simeq N_n\),
\[
b_n=\sum_{r\ge 0}(-1)^r d_r(n)-1.
\]
Finally, since \(K_n\simeq\Delta(J_n)\) and \(\dim\Delta(J_n)\le 2\),
\[
b_n=f_0(\Delta(J_n))-f_1(\Delta(J_n))+f_2(\Delta(J_n))-1.
\]

\begin{table}[ht]
\centering
\begin{tabular}{>{\raggedright\arraybackslash}p{0.22\textwidth} >{\centering\arraybackslash}p{0.38\textwidth} >{\raggedright\arraybackslash}p{0.28\textwidth}}
\toprule
Package & Formula & Interpretation \\
\midrule
Clique package & \(\displaystyle b_n=\sum_{r\ge1}(-1)^{r-1}c_r(n)-1\) & raw simplicial enumeration in \(G_n\) \\[2mm]
Nerve package & \(\displaystyle b_n=\sum_{r\ge0}(-1)^r d_r(n)-1\) & container-overlap enumeration \\[2mm]
Poset package & \(\displaystyle b_n=f_0-f_1+f_2-1\) & low-dimensional chain enumeration in \(\Delta(J_n)\) \\
\bottomrule
\end{tabular}
\caption{Three equivalent numerical descriptions of the bouquet rank \(b_n\).}
\label{tab:numerical-packages}
\end{table}

These packages compute the same integer, but they organize the enumeration differently. The clique package sees the raw simplicial structure of \(K_n\). The nerve package sees the overlap structure of canonical containers. The poset package sees the low-dimensional overlap model.

For this reason the numerical topology of \(K_n\) should not be viewed as a mere afterthought to the bouquet theorem. The bouquet theorem says that the qualitative homotopy type is fixed once \(b_n\) is known. It does not explain why the alternating clique count takes the values it does, how those values are distributed across support regimes, or which overlap configurations contribute positively or negatively to the Euler characteristic. These are genuinely combinatorial questions, and they remain meaningful precisely because the qualitative topological classification has already been simplified.

\begin{synthesisprinciple}[Morphology survives globally through enumeration]\label{prin:morphology-enumeration}
The local and regional morphology of \(G_n\) can influence the numerical global topology of \(K_n\) through enumerative invariants such as clique counts, nerve counts, intersection-poset chains, Euler characteristic, and bouquet rank. But the qualitative form
\[
K_n\simeq \bigvee^{b_n}S^2
\]
is fixed by the overlap-control mechanism.
\end{synthesisprinciple}

\subsection{Why the three packages are not redundant}\label{subsec:packages-not-redundant}

Although the three formulas for \(b_n\) compute the same integer, they are not redundant from the point of view of explanation. The clique package is closest to the raw graph-theoretic object: it counts complete subgraphs of \(G_n\). The nerve package is closer to the topology proof: it counts finite subfamilies of canonical containers with nonempty common intersection. The poset package is closest to the low-dimensional model: it counts chains in \(J_n\), and hence uses only vertices, edges, and triangles of \(\Delta(J_n)\).

Each package discards different information. Passing from clique counts to nerve counts forgets the internal geometry of individual containers. Passing from the nerve to the intersection poset removes further redundant overlap data, retaining the closure-stable overlap structure. The equality of the resulting Euler characteristics is topological, but the differences among the packages remain combinatorially meaningful.

This is why the numerical topology of \(K_n\) is not exhausted by the identity \(b_n=\chi(K_n)-1\). The identity says which number matters. It does not say which enumeration gives the most conceptual explanation of that number.

The reduction to \(\chi(K_n)\) should not be confused with solving the enumeration problem. The homotopy type is simple in form, but its rank may encode complicated partition-theoretic and morphological information.

\section{Open Conceptual Problems and Outlook}\label{sec:open}

This final conceptual section separates what the present synthesis explains from what it leaves open. The qualitative compatibility problem is explained by the overlap-control mechanism. What remains concerns numerical interpretation, morphological decomposition, and possible extensions.

\subsection{Morphological formula for Euler characteristic}

The Euler characteristic can be computed by clique counts, nerve counts, or the low-dimensional poset model. These formulas are exact, but still primarily enumerative.

\begin{openproblem}[Morphological decomposition of the Euler characteristic]\label{op:chi-morphological}
Find a structural decomposition of \(\chi(K_n)\) in terms of regions or strata of \(G_n\), such as the central region, rear region, self-conjugate axis, spine, support strata, simplex layers, shells, and boundary layers.
\end{openproblem}

The difficulty is that Euler characteristic is not generally additive over overlapping morphological regions without inclusion-exclusion. Such a formula would need to account not only for regional simplex counts, but also for the overlaps among regions.

\subsection{The conceptual meaning of the bouquet rank}

The integer \(b_n\) has a clear topological meaning: it is the number of two-spherical summands in the bouquet model. It also has exact enumerative descriptions. But these formulas do not yet give a direct conceptual interpretation of what \(b_n\) counts in partition-theoretic or morphological terms.

\begin{openproblem}[Partition-theoretic meaning of the bouquet rank]\label{op:bn-meaning}
Give a direct combinatorial or morphological interpretation of \(b_n\). For example, determine whether \(b_n\) can be interpreted in terms of overlap defects, minimal two-dimensional cycles, shell interactions, container-gluing obstructions, phase-boundary loops, or partition-theoretic configurations.
\end{openproblem}

\subsection{Phase boundaries and the nerve}

The morphological papers identify degree transitions \cite{LyudogovskiyDegreeLandscape,LyudogovskiyDegreeTheory}, simplex-layer transitions \cite{LyudogovskiySimplexStratification,LyudogovskiySimplexLayers}, support jumps \cite{LyudogovskiySupportJumps}, shell boundaries \cite{LyudogovskiySimplicialShells}, rear-central transitions \cite{LyudogovskiyBoundaryRear}, and directional-regime changes \cite{LyudogovskiyDirectionalGeometry}. These boundaries live naturally in \(G_n\). The global topology, however, is controlled by \(N_n\) and \(J_n\).

\begin{openproblem}[Morphological phase boundaries in \(N_n\) and \(J_n\)]\label{op:phase-boundaries-nerve}
Determine whether major phase boundaries in \(G_n\) correspond to identifiable features of the canonical nerve \(N_n\) or the intersection poset \(J_n\). Possible signatures include changes in container density, clusters of intersections, chains in \(J_n\), two-dimensional cycles in \(\Delta(J_n)\), or localized contributions to \(\chi(K_n)\).
\end{openproblem}

\subsection{The self-conjugate axis and global topology}

The self-conjugate axis is one of the main organizing structures in the morphology of \(G_n\) \cite{LyudogovskiyAxialMorphology,LyudogovskiyGrowingDiscreteGeometricObject}. It is visible in axial morphology, central concentration, spine behavior, and conjugation symmetry. However, its direct role in the global topology of \(K_n\) is not yet fully clear.

\begin{openproblem}[Axis signatures in global topology]\label{op:axis-signature}
Determine whether the self-conjugate axis has a direct signature in \(\chi(K_n)\), \(b_n\), \(N_n\), \(J_n\), or \(\Delta(J_n)\). More specifically, determine whether conjugation symmetry induces useful decompositions or symmetries of the canonical cover, the nerve, or the intersection poset.
\end{openproblem}

\subsection{Morphogenesis of overlap objects}

The present synthesis suggests that one should study the morphogenesis of the overlap-control objects themselves:
\[
\mathcal C_n,
\qquad
N_n,
\qquad
J_n,
\qquad
\Delta(J_n).
\]

\begin{openproblem}[Morphogenesis of the canonical cover and intersection poset]\label{op:overlap-morphogenesis}
Describe how \(\mathcal C_n\), \(N_n\), and \(J_n\) evolve as \(n\) increases. Identify which changes in \(G_n\) are responsible for changes in \(\chi(K_n)\), \(b_n\), \(f(N_n)\), and \(f(\Delta(J_n))\).
\end{openproblem}

\subsection{Universality of overlap control}

The family \(G_n\) is special. Its topology depends on the elementary transfer rule, the partition structure, and the star/top clique classification. Nevertheless, the mechanism suggests a broader question.

\begin{openproblem}[Universality of the local/global separation mechanism]\label{op:universality}
Identify other graph families for which the following pattern holds:
\[
\text{large local cliques}
\Longrightarrow
\text{contractible canonical containers}
\Longrightarrow
\text{low-dimensional overlap model}.
\]
Possible test families include transfer graphs built from integer partitions, compositions, Young diagrams, bounded partitions, colored partitions, and redistribution systems with constraints.
\end{openproblem}

\subsection{Possible future refinements}\label{subsec:future-refinements}

Several refinements would make the bridge between morphology and topology more direct. They are not needed for the qualitative explanation proved in the present paper, but they suggest concrete directions for subsequent work.

\begin{enumerate}[label=\textup{(\arabic*)}, leftmargin=*]
\item \emph{Regional Euler packages.} Given a morphological decomposition of \(G_n\), one may try to associate subcomplexes or subfamilies of containers to the regions and express \(\chi(K_n)\) by an inclusion--exclusion formula over these pieces. The difficulty is that the morphologically meaningful regions are not usually disjoint, and their overlaps may be topologically significant.

\item \emph{Filtered nerves.} One may filter the canonical cover \(\mathcal C_n\) by support size, local simplex dimension, distance from the self-conjugate axis, shell index, or another morphological coordinate. This would produce filtered nerves whose graded Euler data could indicate how different regions contribute to the final bouquet rank.

\item \emph{Weighted intersection posets.} The poset \(J_n\) can be decorated by data inherited from its elements: container dimensions, star/top type, support range, axis distance, degree levels, or local thickness. Such weighted posets would retain the low-dimensional overlap model while remembering more morphology.

\item \emph{Overlap-defect invariants.} Since \(b_n\) is produced by the way containers fail to glue in a contractible pattern, it is natural to seek invariants measuring non-tree-like or non-collapsible features of the overlap structure. Such invariants may give a more conceptual interpretation of the bouquet rank.

\item \emph{Conjugation-equivariant models.} Partition conjugation acts on \(G_n\) and on the morphology of the graph. It is natural to ask whether the canonical cover, the nerve, or the intersection poset admits useful equivariant decompositions, and whether the self-conjugate axis has a visible signature in such decompositions.
\end{enumerate}

\section{Conclusion}\label{sec:conclusion}

The purpose of this paper has been to synthesize the local, morphological, and topological results obtained for the partition graphs \(G_n\) and their clique complexes \(K_n=\Cl(G_n)\) across the preceding papers of the cycle. The central question was why \(G_n\) develops increasing local and morphological complexity while the global homotopy type of \(K_n\) remains simple.

The answer is a clean separation of mechanisms. On the local side, the elementary transfer rule produces ordered local transfer types
\[
\mathfrak t(\lambda)=(t;\alpha;\beta),
\]
which determine the local transfer graph \(B(\lambda)\), the induced neighborhood graph
\[
G_n[N_{G_n}(\lambda)]\cong L(B(\lambda)),
\]
and local invariants such as degree, local clique number, and local simplex dimension. As \(n\) grows, the range and distribution of these local types become richer. This produces the observed morphology of \(G_n\): degree landscapes, simplex layers, support strata, support jumps, shells, thickness, anisotropy, rear-central organization, axial structure, spine phenomena, and morphogenesis across \(n\).

On the global side, however, the topology of \(K_n\) is not controlled directly by the largest local simplices. Every simplex of \(K_n\) lies in a full star/top simplex. These canonical simplices form a good cover \(\mathcal C_n\), and hence
\[
K_n\simeq N(\mathcal C_n)=N_n.
\]
The nerve admits a second reduction through anchor simplices and their intersection poset:
\[
N_n\simeq \Delta(J_n).
\]
The anchor-intersection rigidity implies
\[
\dim\Delta(J_n)\le 2.
\]
Thus the high-dimensional local simplices of \(K_n\) are absorbed into contractible containers, while the global homotopy type is governed by a low-dimensional overlap model.

Together with simple connectedness and homology concentration, this gives the global conclusion
\[
K_n\simeq \bigvee^{b_n}S^2,
\qquad
b_n=\chi(K_n)-1.
\]
Thus the qualitative homotopy type is fixed: the remaining global topological information is numerical.

The main conceptual conclusion is therefore:
\[
\boxed{\text{local complexity grows inside canonical containers,}}
\]
whereas
\[
\boxed{\text{global topology is controlled by their low-dimensional overlap pattern.}}
\]
This explains why growing local simplex dimension, richer degree landscapes, support transitions, shell thickening, anisotropy, and other morphological phenomena do not force higher-dimensional global homology. They contribute to the enumeration of cliques, containers, overlaps, and hence to \(\chi(K_n)\) and \(b_n\), but they do not change the qualitative bouquet-of-spheres form.

The synthesis also identifies the remaining residue of the theory. A direct morphological formula for \(\chi(K_n)\), a partition-theoretic interpretation of the bouquet rank \(b_n\), and a precise account of how phase boundaries, the self-conjugate axis, shell structure, and rear-central morphology appear in the nerve or intersection poset remain open. These problems mark the next stage: not the discovery of new qualitative homotopy types, but the interpretation of the numerical and morphological content of the already established global topology.

The conceptual value of the synthesis is therefore twofold. It gives a compact explanation of why the local/global contrast is not paradoxical, and it identifies the remaining work with sharper precision. Future progress should not seek complexity in the qualitative homotopy type itself; the bouquet theorem rules that out. It should instead seek complexity in the rank \(b_n\), in the decomposition of \(\chi(K_n)\), and in the way the morphological regions of \(G_n\) contribute to the low-dimensional overlap model.

\appendix

\section{Status Table for the Main Ingredients}\label{app:status}

\begin{center}
\small
\begin{tabular}{>{\raggedright\arraybackslash}p{0.38\textwidth} >{\raggedright\arraybackslash}p{0.23\textwidth} >{\raggedright\arraybackslash}p{0.25\textwidth}}
\toprule
Statement & Role in this paper & Status \\
\midrule
Local transfer type determines local neighborhood & local foundation & previously proved \\
Unbounded local simplex dimension & contrast witness & strict corollary \\
Star/top containment of cliques & container mechanism & previously proved \\
Canonical cover is good & nerve reduction & previously proved \\
Anchor-intersection rigidity & overlap control & previously proved \\
Anchor closure and intersection-poset reduction & overlap compression & previously proved; proof sketch included \\
\(\dim\Delta(J_n)\le 2\) & low-dimensional model & previously proved; proof sketch included \\
\(K_n\simeq\bigvee^{b_n}S^2\) & global endpoint & previously proved \\
Morphology contributes numerically & synthesis principle & conceptual consequence \\
Direct morphological formula for \(b_n\) & future refinement & open problem \\
\bottomrule
\end{tabular}

\vspace{3pt}
\refstepcounter{table}\label{tab:status}
\textsc{Table~\thetable.} Logical status of the main ingredients used in the synthesis.
\end{center}

\section*{Acknowledgments}

The author used ChatGPT, an AI language model developed by OpenAI, as an auxiliary research and writing assistant during the preparation of this manuscript. Its role was limited to helping organize the synthesis, draft explanatory passages, check internal consistency, and refine terminology across the cycle of papers on partition graphs. The author is solely responsible for the mathematical content, proofs, interpretations, and final form of the article.

\end{document}